\newcommand{\comments}[1]{}
\newtheorem{theorem}{Theorem}
\newtheorem*{theoremA}{Theorem A}
\newtheorem{definition}{Definition}
\newtheorem{lemma}{Lemma}
\newtheorem{corollary}{Corollary}
\newtheorem{example}{Example}
\newtheorem*{remark}{Remark}
\newtheorem*{remarks}{Remarks}
\newtheorem{proposition}{Proposition}
\def \isnatural {\in\mathbb{N}}
\def\R{\mathbb{R}}
\def\C{\mathbb{C}}
\def \spw {spider's web}
\newcommand{\tef}{transcendental entire function}
\newcommand\qfor{\quad\text{for }}
\def\firstbound{L}
\def\secondbound{L'}
\def\blfootnote{\xdef\@thefnmark{}\@footnotetext}
\begin{document}
%
%
%
%
\title[Periodic domains of quasiregular maps]{Periodic domains of quasiregular maps}
\author{Daniel A. Nicks, \, David J. Sixsmith}
\address{School of Mathematical Sciences \\ University of Nottingham \\ Nottingham
NG7 2RD \\ UK}
\email{Dan.Nicks@nottingham.ac.uk}
\address{School of Mathematical Sciences \\ University of Nottingham \\ Nottingham
NG7 2RD \\ UK}
\email{David.Sixsmith@nottingham.ac.uk}
%
%
%
%
\begin{abstract}
We consider the iteration of quasiregular maps of transcendental type from $\R^d$ to $\R^d$. We give a bound on the rate at which the iterates of such a map can escape to infinity in a periodic component of the quasi-Fatou set. We give examples which show that this result is best possible. Under an additional hypothesis, which is satisfied by all uniformly quasiregular maps, this bound can be improved to be the same as those in a Baker domain of a transcendental entire function. 

We construct a quasiregular map of transcendental type from $\R^3$ to $\R^3$ with a periodic domain in which all iterates tend locally uniformly to infinity. This is the first example of such behaviour in a dimension greater than two. 

Our construction uses a general result regarding the extension of biLipschitz maps. In addition, we show that there is a quasiregular map of transcendental type  from $\R^3$ to $\R^3$ which is equal to the identity map in a half-space. 
\end{abstract}
\maketitle
%
%
%
%
\blfootnote{2010 \itshape Mathematics Subject Classification. \normalfont Primary 37F10; Secondary 30C65, 30D05.}
\blfootnote{Both authors were supported by Engineering and Physical Sciences Research Council grant EP/L019841/1.}
\section{Introduction}
The \emph{Fatou set} $F(f)$ of an entire function $f$ is the set of points $z\in\C$ such that $\{f^k\}_{k\isnatural}$ is a normal family in some neighbourhood of $z$. The \emph{Julia set} $J(f)$ is the complement in $\mathbb{C}$ of $F(f)$, and the \emph{escaping set} is defined by 
\begin{equation}
\label{Ifdef}
I(f) := \{ z \in \C : f^k(z) \rightarrow\infty \text{ as } k\rightarrow\infty \}.
\end{equation}
We refer to \cite{MR1216719} for further information on the properties of these sets.

If $U$ is a component of $F(f)$, and there exists a least integer $p$ such that $f^p(U)~\subset~U$, then $U$ is called \emph{$p$--periodic}. If, in addition, $U \cap I(f) \ne \emptyset$, then $U$ is called a \emph{Baker domain}, and, by normality, the iterates of $f$ tend locally uniformly to infinity in $U$. Baker domains 
have been a subject of much study; see, for example, \cite{MR980793, MR1830901, MR1931601, MR1688496, MR2247639} and the survey article \cite{MR2458809}.

The first example of a {\tef} with a Baker domain is 
\begin{equation}
\label{Fatoueq}
h(z) := z + e^{-z} + 1,
\end{equation}
given by Fatou \cite[Exemple I]{MR1555220}. It can be seen that $h$ has a Baker domain containing a right half-plane. The rate at which points in the Baker domain tend to infinity is slow; $h$ eventually behaves ``like'' $z \mapsto z + 1$. This observation may be quantified for a Baker domain of any {\tef} as follows \cite[Theorem~1]{MR2247639}. The first proof of this result, in the case $p=1$, was given by Baker \cite{MR980793}.
\begin{theoremA}
Suppose that $f$ is a {\tef} and that $U$ is a $p$-periodic Baker domain of $f$. Then, for $x \in U$,
\begin{equation}
\label{TAeq1}
\log|f^{kp}(x)| = O(k) \text{ as } k \rightarrow\infty.
\end{equation}
Also, if $X \subset U$ is compact, then there exist $C > 1$ and $k_0 \isnatural$ such that
\begin{equation}
\label{TAeq2}
|f^{kp}(x')| \leq C|f^{kp}(x)|, \qfor x, x' \in X, k \geq k_0.
\end{equation}
Finally, if $\xi_0 \in U$ and $\Gamma_0 \subset U$ is a curve joining $\xi_0$ to $f^{p}(\xi_0)$ such that $0~\notin~\bigcup_{k=0}^\infty f^{kp}(\Gamma_0)$, then there is a constant $C>1$ such that 
\begin{equation}
\label{TAeq3}
\frac{1}{C} |x| \leq |f^p(x)| \leq C|x|, \qfor x \in \bigcup_{k=0}^\infty f^{kp}(\Gamma_0).
\end{equation}
\end{theoremA}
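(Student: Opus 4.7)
My plan is to reduce to the case $p=1$ by replacing $f$ with $f^p$, so that $U$ is forward-invariant, and then to equip the hyperbolic domain $U\subsetneq\C$ with its hyperbolic metric $\lambda_U$ and distance $d_U$. Since $f\colon U\to U$ is a holomorphic self-map, the Schwarz--Pick lemma gives $d_U(f(z),f(w))\leq d_U(z,w)$. The central technical ingredient I would need is a lower bound $\lambda_U(z)\geq c/|z|$ for $z\in U$ with $|z|$ large. After translating so that some fixed point of $\partial U$ lies at $0$ (possible since $U\ne\C$), this follows from the Koebe-type estimate $\lambda_U(z)\geq 1/(2\,\mathrm{dist}(z,\partial U))$ combined with $\mathrm{dist}(z,\partial U)\leq|z|$. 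For any rectifiable curve $\sigma$ in $U$ avoiding $0$, this yields
\[
\ell_U(\sigma)\geq c\int_\sigma\frac{|dz|}{|z|}\geq c\,\bigl|\log|\sigma(1)|-\log|\sigma(0)|\bigr|.
\]

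For \eqref{TAeq1}, I would fix $x\in U$, pick any curve $\gamma_0\subset U$ from $x$ to $f(x)$ of hyperbolic length $L$, and set $\gamma_k:=f^k(\gamma_0)$, a curve from $f^k(x)$ to $f^{k+1}(x)$ of hyperbolic length at most $L$ by Schwarz--Pick. Concatenating these produces a path of hyperbolic length at most $kL$ from $x$ to $f^k(x)$, and the density estimate converts this to $|\log|f^k(x)|-\log|x||\leq kL/c$, up to a negligible correction from the translation of origin (absorbed since $|f^k(x)|\to\infty$ in a Baker domain). For \eqref{TAeq2} the key observation is that $D:=\mathrm{diam}_{d_U}(X)$ is finite for any compact $X\subset U$, so Schwarz--Pick gives $d_U(f^k(x),f^k(x'))\leq D$; a near-geodesic curve in $U$ and the density estimate then bound $|\log(|f^k(x')|/|f^k(x)|)|$ by a constant, yielding the required $C=e^{D/c}$ for $k$ large enough.

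For \eqref{TAeq3}, each image $f^{kp}(\Gamma_0)$ is a curve of hyperbolic length at most $L:=\ell_U(\Gamma_0)$ joining $f^{kp}(\xi_0)$ to $f^{(k+1)p}(\xi_0)$. Given $x\in f^{kp}(\Gamma_0)$, the point $f^p(x)$ lies in $f^{(k+1)p}(\Gamma_0)$, and these two curves share the common endpoint $f^{(k+1)p}(\xi_0)$; concatenating yields a path from $x$ to $f^p(x)$ of hyperbolic length at most $2L$ that lies inside $\bigcup_k f^{kp}(\Gamma_0)$ and therefore avoids $0$ by hypothesis. The density estimate then gives $|\log|f^p(x)|-\log|x||\leq 2L/c$, which is the claimed bound with $C=e^{2L/c}$.

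I expect the main obstacle to be the density estimate $\lambda_U(z)\geq c/|z|$: it is immediate from Koebe's one-quarter theorem if $U$ is simply connected, but Baker domains of transcendental entire functions need not be simply connected, and in the multiply connected case the standard Beardon--Pommerenke bound $\lambda_U(z)\gtrsim 1/(\mathrm{dist}(z,\partial U)\log(\cdots))$ would only give an iterated-logarithm bound on $\log|f^k(x)|$. Some additional input, exploiting either the structure of $\partial U$ near infinity or the dynamics of $f$ itself on $U$, would likely be needed to obtain the linear-in-$k$ growth in full generality.
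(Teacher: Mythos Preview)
Your argument is essentially correct and closely parallels the paper's own proof of Theorem~\ref{Tspeed2} (which, as the remark after that theorem notes, gives a new proof of Theorem~A). The structural skeleton is the same: equip $U$ with a conformally natural metric, use a contraction property of $f^p$ on that metric, and convert metric bounds into multiplicative bounds on moduli via a lower estimate on the metric density of the form $c/|z|$ after placing a boundary point at the origin.

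The one point where you hesitate is in fact not an obstacle: Baker domains of transcendental entire functions are \emph{always} simply connected. This is a consequence of Baker's theorem (the result cited in the paper as \cite[Theorem~3.1]{MR759304}) that any multiply connected Fatou component of a transcendental entire function is bounded; since Baker domains are unbounded, they are simply connected, and your Koebe estimate $\lambda_U(z)\geq 1/(2\,\mathrm{dist}(z,\partial U))$ applies directly. So your proof goes through without the Beardon--Pommerenke detour.

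The paper takes a genuinely different but parallel route: instead of the hyperbolic metric it uses the conformal modulus metric $\mu_U$ of Vuorinen, with the contraction coming from Lemma~\ref{l3} (the quasiregular analogue of Schwarz--Pick) and the density-type lower bound coming from Lemma~\ref{muestimate}, which requires only that $\partial_\infty U$ be connected. The topological input is therefore that $U$ is \emph{full}, rather than simply connected. For (\ref{TAeq3}) the paper simply applies (\ref{TAeq2}) with $X=\Gamma_0\cup f^p(\Gamma_0)$, which is slightly cleaner than your concatenation argument. The payoff of the paper's choice of metric is that $\mu_U$ and its estimates make sense in $\R^d$ for all $d\ge 2$, which is exactly what is needed for the quasiregular generalisations in Theorems~\ref{Tspeed} and~\ref{Tspeed2}; your hyperbolic-metric argument is confined to dimension two.
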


In this paper we use some techniques from our earlier paper \cite{SixsmithNicks1} to extend this study to more than two (real) dimensions. Suppose that $d \geq 2$, and that $f : \R^d \to \R^d$ is a \emph{quasiregular map}; we define a quasiregular map in Section~\ref{Sdefs}. Here we are mainly interested in the case that $f$ is of \emph{transcendental type}; in other words, $f$ has an essential singularity at infinity.

In the context of quasiregular maps, a definition of the Julia set different to that used in complex dynamics is required. We follow \cite{MR3009101, MR3265283}, and define the \emph{Julia set $J(f)$} to be the set of points $x \in \R^d$ such that
\begin{equation}
\label{Jdef}
\operatorname{cap} \left(\R^d\setminus\bigcup_{k=1}^\infty f^k(U)\right) = 0, \qfor \text{every neighbourhood } U \text{ of } x.
\end{equation}
Recall that if $S \subset \R^d$, then cap $S = 0$ means that $S$ is, in a sense that can be made precise, a ``small'' set; we refer to \cite{MR1238941, MR950174} for a full definition. If $f$ is a quasiregular map of transcendental type, then $J(f)$ is infinite \cite[Theorem 1.1]{MR3265283}. 

Following \cite{SixsmithNicks1}, we define the \emph{quasi-Fatou set $QF(f)$} as the complement in $\R^d$ of the Julia set; note that it follows from \cite[Theorem 1.2]{MR3265283} that if $f$ is a {\tef}, then $QF(f) = F(f)$. If $U$ is a component of $QF(f)$, and there exists a least integer $p$ such that $f^p(U) \subset U$, then $U$ is called \emph{a $p$--periodic domain}. If $U$ is not periodic, but there is an integer $k$ such that $f^k(U)$ is contained in a periodic domain, then $U$ is called \emph{pre-periodic}. In the remaining case, $U$ is called \emph{wandering}. The escaping set $I(f)$ is defined by the obvious modification to (\ref{Ifdef}).

Since it follows from \cite[Theorem 3.1]{MR759304} that a Baker domain of a {\tef} is full, our first main result is a generalisation of (\ref{TAeq1}) to quasiregular maps. Here $K_I(f)$ is the \emph{inner dilatation} of $f$; we refer to Section~\ref{Sdefs} for a definition. We say that a domain is \emph{full} if it has no bounded complementary components; otherwise we say that it is \emph{hollow}. 
Also, $\log^{+}$ is the function defined by $$\log^+(x) := \log(\max\{1,x\}), \qfor x \in \R.$$
\begin{theorem}
\label{Tspeed}
Suppose that $f : \R^d \to \R^d$ is a quasiregular map of transcendental type, and that $U$ is a full $p$-periodic domain. Then
\begin{equation}
\label{Tspeedeq1}
\limsup_{k\rightarrow\infty} \frac{1}{k}\log^+\log|f^{kp}(x)| \leq  \log K_I(f^p), \qfor x \in U.
\end{equation}
\end{theorem}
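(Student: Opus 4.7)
My plan is to use the quasihyperbolic metric $k_U$ on $U$, together with a distortion estimate for $f^p:U\to U$, to bound the growth of $|f^{kp}(x)|$. If $U$ is bounded then the orbit is bounded and (\ref{Tspeedeq1}) is trivial, so I may assume $U$ is unbounded. Since $f$ is of transcendental type, $J(f)$ is non-empty, giving $U\neq\R^d$; hence $k_U$ is well defined. Fixing any $z_*\in\partial U$, the elementary bound $\mathrm{dist}(z,\partial U)\leq |z|+|z_*|$ together with integration of $ds/\mathrm{dist}(z,\partial U)$ along any curve in $U$ implies the logarithmic lower bound
$$k_U(x_0,y)\ \geq\ \log|y|+O(1)\qfor y\in U,\ |y|\to\infty,$$
for any fixed base point $x_0\in U$.

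The heart of the argument is a quasiregular analogue of the Schwarz-Pick non-expansion:
$$k_U(f^p(x),f^p(y))\ \leq\ K_I(f^p)\cdot k_U(x,y), \qfor x,y\in U.$$
The natural approach is infinitesimal: show that $f^p$ distorts the quasihyperbolic length of short curves in $U$ by at most a factor $K_I(f^p)$, using the analytic characterisation of the inner dilatation via the Jacobian, or equivalently via the $K_I$-inequality $M(f^p(\Gamma))\leq K_I(f^p)\,M(\Gamma)$ for curve families together with an integral-geometric comparison to quasihyperbolic length. Granted this, pick a curve $\gamma_0\subset U$ from $x$ to $f^p(x)$ of finite $k_U$-length $L$. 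Since $f^p(U)\subset U$, the iterates $\gamma_j:=f^{jp}(\gamma_0)\subset U$ join $f^{jp}(x)$ to $f^{(j+1)p}(x)$ and have $k_U$-length at most $K_I(f^p)^j L$, so concatenating gives
$$k_U(x,f^{kp}(x))\ \leq\ L\sum_{j=0}^{k-1}K_I(f^p)^j\ =\ O(K_I(f^p)^k).$$
Combined with the QH lower bound this yields $\log|f^{kp}(x)|=O(K_I(f^p)^k)$, hence $\log^+\log|f^{kp}(x)|\leq k\log K_I(f^p)+O(1)$; dividing by $k$ and taking $\limsup$ gives (\ref{Tspeedeq1}).

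The main obstacle is the distortion inequality with the sharp multiplicative constant $K_I(f^p)$, rather than some larger quantity such as a power of $K_I(f^p)$: this is where quasiregular modulus theory enters essentially. Care is also required because $f^p$ can branch, but the branch set has $(d-2)$-dimensional Hausdorff measure zero and can be avoided in the path-integral argument, or handled directly through the curve-family modulus approach. A secondary technicality is ensuring that the fullness hypothesis on $U$ is actually used; it enters through the lower bound on $k_U$ and the construction of the connecting curve $\gamma_0$ with finite quasihyperbolic length.
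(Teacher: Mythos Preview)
Your overall strategy---fix a hyperbolic-type metric on $U$, show that $f^p$ expands it by at most a factor $K_I(f^p)$, telescope, and compare with a logarithmic lower bound---is exactly the paper's. The gap is the choice of metric: the quasihyperbolic metric $k_U$ does \emph{not} satisfy $k_U(f^p(x),f^p(y))\le K_I(f^p)\,k_U(x,y)$ with the constant $K_I(f^p)$, and neither of your sketched routes establishes it. Quasihyperbolic length is not a modulus quantity, so the $K_I$-inequality for curve families does not transfer to it; and the infinitesimal route via the Jacobian leads only to a Koebe-type estimate relating $|Df^p(z)|$ to $\operatorname{dist}(f^p(z),\partial f^p(U))/\operatorname{dist}(z,\partial U)$, which loses a constant factor. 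Even in the conformal case $K_I=1$ the inequality is false: for the M\"obius automorphism $\phi(z)=(z+a)/(1+az)$ of the unit disc with $0<a<1$ one computes $k_{\mathbb D}(\phi(0),\phi(i\epsilon))\sim (1+a)\,k_{\mathbb D}(0,i\epsilon)$ as $\epsilon\to0$. Any fixed loss $C>1$ per iterate turns your telescoping sum into $O\bigl((C\,K_I(f^p))^k\bigr)$ and yields only $\limsup\le\log K_I(f^p)+\log C$, which does not give (\ref{Tspeedeq1}); since Examples~\ref{Example1}--\ref{Example2} show that (\ref{Tspeedeq1}) is sharp, no such slack is available.

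The paper avoids this by replacing $k_U$ with Vuorinen's modulus metric $\mu_U$. Because $\mu_U$ is built directly from conformal capacity, the $K_I$-inequality gives $\mu_{f^p(U)}(f^p(a),f^p(b))\le K_I(f^p)\,\mu_U(a,b)$ (Lemma~\ref{l3}) with the exact constant, and the telescoping argument then goes through unchanged. The logarithmic lower bound is supplied by Lemma~\ref{muestimate}, which requires $\partial_\infty U$ to be connected---\emph{that} is where the fullness of $U$ is used, not in the places you suggest (your lower bound on $k_U$ needs only $\partial U\ne\emptyset$, and a curve $\gamma_0\subset U$ of finite $k_U$-length always exists).
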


In the other direction, in Section~\ref{Sexample} we give examples which show that the conclusion of Theorem~\ref{Tspeed} is best possible; in particular, the bound (\ref{Tspeedeq1}) cannot be improved to (\ref{TAeq1}) in the quasiregular case.

\begin{remarks}\normalfont
\begin{enumerate}
\item We cannot assume that if a component, $U$, of the quasi-Fatou set meets the escaping set, then the iterates tend to infinity locally uniformly in $U$. In Example~\ref{Example3} below we give a quasiregular map of $\R^3$ with a periodic domain that meets, but is not contained in, the escaping set. \\  
\item If $f$ is a quasiregular map that is not of transcendental type, and the degree of $f$ is greater than $K_I(f)$, then the Julia set of $f$ is non-empty \cite[Theorem 1.1]{MR3009101}, and it follows from, for example, \cite[Theorem 1.4]{ETS:9408364}, that there exists a unique unbounded component of $QF(f)$, which is hollow and periodic. The rate at which the iterates tend to infinity in this component is described in \cite{ETS:9408364}.
\end{enumerate}
\end{remarks}

A quasiregular map $f : \R^d \to \R^d$ is called \emph{uniformly $K$-quasiregular} if all the iterates of $f$ are $K$-quasiregular; this definition was introduced in \cite{MR1404085}. It is possible to give a result stronger than Theorem~\ref{Tspeed} for maps with this property. However, for $d\geq 3$, there are currently no known examples of uniformly quasiregular maps of transcendental type.  

We introduce the following definition. Suppose that $f : \R^d \to \R^d$ is a quasiregular map, and that $U \subset \R^d$ is a domain. We say that $f$ is \emph{locally uniformly quasiregular} in $U$ if, for each $x \in U$, there is a neighbourhood $V$ of $x$ such that all the iterates of $f$ restricted to $V$ are $K$-quasiregular for some $K$, which may depend on $x$. Clearly, if $f: \R^d \to \R^d$ is uniformly quasiregular, then $f$ is locally uniformly quasiregular in $\R^d$. We note also that the function $f : \R^3 \to \R^3$ constructed in the proof of Theorem~\ref{Texists} below is locally uniformly quasiregular in the unique component $U=QF(f)$.

We have the following stronger result for maps with this property.
\begin{theorem}
\label{Tspeed2}
Suppose that $f : \R^d \to \R^d$ is a quasiregular map of transcendental type, that $U$ is a full $p$-periodic domain that meets $I(f)$, and that $f$ is locally uniformly quasiregular in $U$. Then the iterates of $f$ tend locally uniformly to infinity in $U$, and the conclusions of Theorem~A all hold.
\end{theorem}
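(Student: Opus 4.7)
My plan is to prove the three conclusions of Theorem~A in order: first the locally uniform convergence of $f^{kp}$ to $\infty$ on $U$, then \eqref{TAeq2}, and finally \eqref{TAeq1} and \eqref{TAeq3}.

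For the locally uniform convergence, fix $x \in U$ and choose, using the locally uniform quasiregularity hypothesis, a neighbourhood $V \subset U$ and a constant $K$ such that every $f^{kp}|_V$ is $K$-quasiregular. Since $f^{kp}(V) \subset U$ and $\R^d \setminus U$ contains the infinite Julia set $J(f)$, Miniowitz's quasiregular analogue of Montel's theorem yields normality of $\{f^{kp}|_V\}$, and each subsequential limit is either a finite-valued $K$-quasiregular map or the constant $\infty$. I would then show that $E := \{y \in U : f^{kp}(y) \to \infty\}$ is both open and closed in $U$: a finite-valued subsequential limit near a point of $E$ would contradict the divergence there, and a finite limit near an accumulation point $x$ of $E$ would force $f^{kp}(x_n)$ to stay bounded along a subsequence, contradicting $x_n \in E$. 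As $E$ is nonempty by hypothesis and $U$ is connected, $E = U$, and the argument also gives the locally uniform convergence.

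For \eqref{TAeq2}, fix $x_0 \in U$ and a neighbourhood $V$ as above, and normalise by setting $F_k := f^{kp}/|f^{kp}(x_0)|$. Each $F_k$ is $K$-quasiregular on $V$ (composition of a $K$-quasiregular map with a similarity) and satisfies $|F_k(x_0)| = 1$. For all $k$ sufficiently large, $F_k$ omits $0$ on a compact sub-neighbourhood of $x_0$ by the first step, and Miniowitz then gives normality; any subsequential limit $F$ inherits $|F(x_0)| = 1$, so $F$ is finite-valued and $K$-quasiregular. A standard normal families argument then forces $\{F_k\}$ to be locally uniformly bounded near $x_0$ (a blow-up subsequence would, via a further convergent subsequence, contradict the finiteness of the limit), giving $|f^{kp}(x)| \leq C|f^{kp}(x_0)|$ for $x$ near $x_0$ and $k$ large. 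Covering a compact $X \subset U$ by finitely many such neighbourhoods and chaining the resulting estimates yields \eqref{TAeq2}.

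Once \eqref{TAeq2} is available the remaining conclusions follow along the lines of Baker's original argument: the periodicity makes $\Gamma_0 \cup f^p(\Gamma_0)$ a compact subset of $U$, so \eqref{TAeq2} applied to this set gives $|f^{(k+1)p}(\xi_0)| \leq C|f^{kp}(\xi_0)|$ for $k$ large, hence $|f^{kp}(\xi_0)| = O(C^k)$, and extending by \eqref{TAeq2} to all $x \in U$ yields \eqref{TAeq1}. The same estimate applied to consecutive pairs $f^{kp}(\Gamma_0) \cup f^{(k+1)p}(\Gamma_0)$ gives \eqref{TAeq3}. I expect the hardest point to be the second step: ensuring that the constant $C$ in \eqref{TAeq2} is independent of $k$. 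This is precisely what the locally uniform quasiregularity hypothesis delivers, by providing a single dilatation bound $K$ for all iterates on each small neighbourhood, which is what sustains the normal family argument uniformly in $k$.
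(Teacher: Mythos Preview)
Your first step (locally uniform convergence via Miniowitz normality and an open--closed argument) is sound: the family $\{f^{kp}|_V\}$ omits the fixed set $\R^d\setminus U$, which has positive capacity because $U$ is full and hence its complement contains an unbounded continuum. Your third step, deducing \eqref{TAeq1} and \eqref{TAeq3} from \eqref{TAeq2}, is also fine and matches the paper.

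The gap is in Step~2. You claim that Miniowitz gives normality of the renormalised family $F_k = f^{kp}/|f^{kp}(x_0)|$ because each $F_k$ omits $0$. That is not enough: a single point has zero conformal capacity, and Miniowitz's criterion requires omitting a set (or sets) of uniformly positive capacity --- already in the holomorphic case, omitting only $\{0,\infty\}$ fails (consider $z\mapsto z^n$ on the unit disc). What $F_k$ actually omits is the rescaled set $(\R^d\setminus U)/|f^{kp}(x_0)|$, and one could try to argue that these remain ``uniformly thick'' continua in the chordal metric on $\overline{\R^d}$; but that is a different and more delicate argument than the one you wrote, and as stated the normality of $\{F_k\}$ is unjustified. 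Without it, the passage from $|F(x_0)|=1$ to a uniform bound on $|F_k|$ collapses.

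The paper sidesteps this entirely by using the conformally invariant metric $\mu_U$ rather than normal families. Fullness of $U$ makes $\partial_\infty U$ connected, so Lemma~\ref{muestimate} gives $\mu_U(a,b)\ge c_d\log\bigl(|a-x_0|/|b-x_0|\bigr)$ for $x_0\in\partial U$. Locally uniform quasiregularity plus compactness makes $f$ uniformly $K$-quasiregular on the orbit domain $V=\bigcup_{k\ge0} f^{kp}(D)$ of any relatively compact $D\subset U$, and then Lemma~\ref{l3} yields $\mu_V(f^{kp}(x'),f^{kp}(x))\le K\,\mu_V(x',x)$. Combined with the logarithmic comparison this gives \eqref{TAeq2} directly, with \eqref{TAeq1}, \eqref{TAeq3} and the locally uniform convergence as corollaries. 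This is the quasiregular analogue of the hyperbolic-metric proof of Theorem~A, and it is exactly the Harnack-type control that your normalisation argument was trying to reach.
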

\begin{remark}\normalfont
If $f$ is a {\tef}, then $f$ is uniformly quasiregular, and so the proof of Theorem~\ref{Tspeed2} gives a new technique to prove Theorem~A.
\end{remark}

It was shown in \cite{MR1684251} that a Baker domain of a {\tef} cannot meet the \emph{fast escaping set} $A(f)$; we defer a definition of this set to Section~\ref{Sdefs}. Since, as mentioned earlier, a Baker domain of a {\tef} is full, one direction of the following result can be seen as a generalisation of this fact to quasiregular maps of transcendental type. The other direction, when combined with Lemma~\ref{lemm:inA} below, shows that the hypothesis in the statement of Theorem~\ref{Tspeed} that $U$ is full seems to be required. We observe, however, that there is no known example of a quasiregular map of transcendental type with a hollow domain that is periodic or pre-periodic.
\begin{theorem}
\label{T2}
Suppose that $f : \mathbb{R}^d \to \mathbb{R}^d$ is a quasiregular map of transcendental type, and that $U$ is a component of $QF(f)$ that is periodic or pre-periodic. Then $U$ meets $A(f)$ if and only if $U$ is hollow.
\end{theorem}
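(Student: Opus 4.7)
The plan is to prove the two implications separately, using Theorem~\ref{Tspeed} for the ``only if'' direction and a maximum-principle argument combined with Lemma~\ref{lemm:inA} and the transcendental-type hypothesis for the ``if'' direction.

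For the only-if direction, I would show the contrapositive: if $U$ is full then $U$ does not meet $A(f)$. First one reduces to the case that $U$ is $p$-periodic rather than merely pre-periodic, using the invariance of $A(f)$ under $f$. This reduction also requires that, for pre-periodic $U$ with $f^N(U) \subset V$ where $V$ is periodic, fullness of $U$ forces fullness of $V$; this is a small topological point following from the fact that the open, discrete quasiregular map $f^N$ allows one to pull back a bounded complementary component of $V$ to produce one of $U$. Once $U$ is periodic and full, Theorem~\ref{Tspeed} yields, for $x \in U$,
$$\log |f^{kp}(x)| = O\bigl(K_I(f^p)^{k}\bigr) \qquad \text{as } k \to \infty,$$
i.e.\ at most double-exponential growth in $k$. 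Lemma~\ref{lemm:inA} should then imply that any point of $A(f)$ escapes to infinity strictly faster than this rate, so $U \cap A(f) = \emptyset$.

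For the if direction, suppose $U$ is hollow and let $W$ be a bounded complementary component of $U$, so that $\partial W \subset \partial U \subset J(f)$. I would choose a sphere $S \subset U$ of large radius enclosing $W$ and study the iterates $f^{kp}(S) \subset U$ in the periodic case (reducing the pre-periodic case to this by a forward iterate, after checking that hollowness is preserved). Since $f$ is of transcendental type, a quasiregular maximum-principle argument applied to $f^{kp}$ on the ball bounded by $S$ should force the minimum of $|f^{kp}|$ on $S$ to grow at an iterated-maximum-modulus rate, much faster than anything a full component can achieve. A suitably chosen point of $S$ or of a forward image of $S$ in $U$ would then satisfy the growth criterion of Lemma~\ref{lemm:inA}, showing that $U$ meets $A(f)$.

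The main obstacle, in my view, is making the quantitative step in the if direction rigorous. In the complex-analytic case it follows from the maximum modulus principle and winding-number considerations for holomorphic maps; in the quasiregular setting one must rely on Rickman-type modulus inequalities and keep careful track of the inner dilatation constant $K_I(f^p)$ through successive iterates. The essential new topological input is that the bounded set $\partial W \subset J(f)$ lies inside the region enclosed by $S$, which is what makes the modulus estimates yield the fast-escape rate demanded by Lemma~\ref{lemm:inA}, whereas for a full domain the maximum principle only delivers the much weaker bound supplied by Theorem~\ref{Tspeed}.
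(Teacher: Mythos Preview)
Your ``only if'' direction is essentially the paper's argument: reduce to a full periodic domain (the paper invokes \cite[Corollary~5.2]{SixsmithNicks1} for the preservation of fullness, which is exactly what your pull-back sketch is aiming at), then combine Theorem~\ref{Tspeed} with Lemma~\ref{lemm:inA} to obtain a contradiction between the bounded rate \eqref{Tspeedeq1} and the unbounded rate \eqref{xinAeq}.

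Your ``if'' direction, however, departs from the paper and has a real gap. You want a round sphere $S\subset U$ of large radius enclosing the bounded complementary component $W$, but nothing in the hypotheses guarantees such a sphere exists; $U$ is only known to be open and connected, and its geometry near $W$ may be arbitrarily thin. Even granting some topological sphere $S\subset U$ around $W$, your maximum-principle step runs in the wrong direction: the maximum principle bounds $|f^{kp}|$ on the region enclosed by $S$ from above by its maximum on $S$, it does not force the \emph{minimum} of $|f^{kp}|$ on $S$ to grow at an iterated-$M(R,f)$ rate. You acknowledge this is the main obstacle, and indeed the modulus/winding ideas you gesture at do not obviously supply the missing inequality for a periodic domain.

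The paper sidesteps all of this with a short topological argument. It quotes \cite[Theorem~1.3]{SixsmithNicks1} to conclude that a hollow periodic or pre-periodic component $U$ is unbounded, and then \cite[Theorem~1.4]{SixsmithNicks1} to conclude that such a $U$ has \emph{no} unbounded complementary components. Since Lemma~\ref{lemm:JA} gives that $A(f)$ has an unbounded component, that component cannot lie entirely in $\mathbb{R}^d\setminus U$, and hence meets $U$. No growth estimates or maximum-principle reasoning are used in this direction at all; the analytic work is hidden in the cited structural results about hollow quasi-Fatou components.
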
 
In Section~\ref{Sspeed} we state a corollary to Theorem~\ref{T2} for functions that do not grow too slowly. 

Fatou showed that the Baker domain of the function $h$ defined in (\ref{Fatoueq}) is equal to the whole Fatou set $F(h)$. Our second main result is that there is a quasiregular map of $\R^3$ with analogous properties.
\begin{theorem}
\label{Texists}
There exists a quasiregular map of transcendental type $f : \mathbb{R}^3~\to~\mathbb{R}^3$ such that $QF(f)$ consists of a single full domain, $U$, in which all iterates of $f$ tend locally uniformly to infinity. Moreover, $f$ is locally uniformly quasiregular in $U$. 
\end{theorem}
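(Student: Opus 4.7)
The plan is to mimic Fatou's classical example $h(z) = z + e^{-z} + 1$ by exploiting the auxiliary result advertised in the abstract: there exists a quasiregular map $g : \R^3 \to \R^3$ of transcendental type that agrees with the identity on a half-space. I would take $H := \{x \in \R^3 : x_1 \geq 0\}$ with $g|_H \equiv \mathrm{id}$, and then define $f(x) := g(x) + e_1$, where $e_1 := (1,0,0)$. Post-composing with a translation preserves quasiregularity with the same dilatation and preserves the essential singularity at $\infty$, so $f$ is quasiregular of transcendental type. Since $f|_H(x) = x + e_1$, the half-space $H$ is forward invariant and $f^k(x) = x + k e_1$ on $H$, whence $f^k \to \infty$ locally uniformly on $H$ and $H \subset QF(f)$. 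Let $U$ denote the component of $QF(f)$ containing $H$.

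To upgrade the conclusions from $H$ to all of $U$, I would proceed as follows. Any $x \in U$ can be connected to $H$ by a compact arc in $U$; by the normal-family property defining $QF(f)$ (or by appealing to Theorem~\ref{Tspeed2} once the other hypotheses are verified), the local uniform escape to $\infty$ spreads from $H$ to the whole component $U$. Local uniform quasiregularity in $U$ is then immediate: given $x \in U$, choose $k_0$ with $f^{k_0}(x) \in \mathrm{int}(H)$ and a neighbourhood $V$ of $x$ with $f^{k_0}(V) \subset H$. For $k \geq k_0$ we have $f^k|_V = T_{(k-k_0)e_1} \circ f^{k_0}|_V$, a composition of a translation with a fixed $K_I(f)^{k_0}$-quasiregular map, while $f^j|_V$ for $j < k_0$ is $K_I(f)^{k_0}$-quasiregular automatically. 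Thus every iterate of $f$ restricted to $V$ is $K$-quasiregular with $K = K_I(f)^{k_0}$.

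The main obstacle is showing that $QF(f) = U$ and that $U$ is full; this is where the finer properties of the biLipschitz-extension construction of $g$ must be used. My approach would be to arrange $g$ so that its behaviour on $\R^3 \setminus H$ is also controlled enough to (i) prevent additional Fatou components, and (ii) prevent $U$ from having bounded complementary components. Concretely, I would aim to show that every point of $\R^3 \setminus J(f)$ can be joined to $H$ through $QF(f)$, and that $J(f)$ has no bounded component, both of which should follow from the geometric structure of the extension of $g$. Once these two facts are established, together with the escape-to-infinity and locally uniform quasiregularity statements proved above, Theorem~\ref{Texists} follows.
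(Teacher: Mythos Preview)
Your overall strategy matches the paper's: translate the map $g$ of Theorem~\ref{Texistshalf} so that a half-space becomes forward invariant under a pure translation, and argue from there. The parts you actually carry out---that $f$ is quasiregular of transcendental type, that $f^k\to\infty$ locally uniformly on $H$, and that $H\subset QF(f)$---are fine.

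However, there is a genuine gap and a circularity in the step where you upgrade from $H$ to the component $U$. First, in the quasiregular setting $QF(f)$ is \emph{not} defined via normal families; it is defined through the capacity condition~(\ref{Jdef}). There is no normality argument available to spread escape from $H$ across $U$, and in fact Example~\ref{Example3} of the paper exhibits a quasiregular map of $\R^3$ with a periodic domain that meets but is not contained in $I(f)$. So your first proposed route fails outright. Second, your alternative route via Theorem~\ref{Tspeed2} is circular: to apply it you need $f$ locally uniformly quasiregular in $U$; to prove that, you assume for each $x\in U$ that some iterate $f^{k_0}(x)$ lies in $\operatorname{int}(H)$; but that is precisely what must be established. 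The paper sidesteps this by \emph{defining} $U:=\bigcup_{n\ge0} f^{-n}(H_0)$, so every point of $U$ tautologically enters $H_0$; connectedness of $U$ is then proved separately by an inductive argument using Lemma~\ref{lemm:asympt}.

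Finally, the core of the theorem---that $QF(f)=U$---is not touched in your proposal beyond stating it as an obstacle. The paper proves this via a concrete expansion estimate (Proposition~\ref{prop:Fbeam}(b) and Proposition~\ref{prop:discs}): on each fundamental half-beam $f$ expands distances by a factor at least $32$, so if some ball $B(\xi_0,\delta)$ avoided $U$ under all iterates one could produce balls $B(\xi_k,2^k\delta)$ inside fundamental half-beams, which is impossible once $2^k\delta>1$. This blow-up argument is the substance of the proof, and it depends on the explicit construction of $g$; without it, your outline remains a plan rather than a proof. Fullness of $U$ is then deduced from Theorem~\ref{T2} and Lemma~\ref{lemm:inA}.
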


In the proof of Theorem~\ref{Texists} we define a function with the following property, which may be of independent interest; in fact the functions constructed in the proofs of Theorem~\ref{Texists} and Theorem~\ref{Texistshalf} differ only by a translation.
\begin{theorem}
\label{Texistshalf}
There exists a quasiregular map of transcendental type $g : \mathbb{R}^3~\to~\mathbb{R}^3$ that is equal to the identity map in a half-space.
\end{theorem}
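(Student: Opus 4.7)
The plan is to assemble $g$ piecewise: prescribe $g$ to be the identity on a closed half-space, prescribe $g$ to equal a Zorich-type quasiregular model on a second half-space far away, and interpolate between them across a bounded buffer slab using the general biLipschitz extension result advertised in the introduction. Concretely, I set $H := \{x \in \R^3 : x_1 \geq 0\}$ and aim for $g|_H = \mathrm{id}_H$, placing all the transcendental behaviour of $g$ in the complementary region $\{x_1 \leq -1\}$.

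First I would fix a Zorich-type quasiregular map $Z$ defined on $\{x_1 \leq -1\}$, periodic in the coordinates $x_2, x_3$ and behaving exponentially in $x_1$, with uniformly bounded dilatation. After composing with a suitable bounded biLipschitz adjustment, I would arrange that $Z|_{\{x_1 = -1\}}$ is a biLipschitz self-map of the plane $\{x_1 = -1\}$. The identity also restricts to a biLipschitz self-map of the plane $\{x_1 = 0\}$. On the buffer slab $S := \{-1 \leq x_1 \leq 0\}$, the biLipschitz extension theorem of the paper would then produce a biLipschitz map agreeing with these two boundary prescriptions. Gluing the three pieces gives a continuous map $g : \R^3 \to \R^3$.

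Next I would verify the two required properties. Quasiregularity: the identity is $1$-quasiconformal on $H$, the Zorich-type model $Z$ is quasiregular with bounded dilatation on $\{x_1 \leq -1\}$, and the slab piece is biLipschitz and hence quasiconformal. Continuity across the two gluing planes, combined with a standard removability result for codimension-one biLipschitz interfaces, upgrades piecewise quasiregularity to global quasiregularity of $g$ on $\R^3$. Transcendental type: on $H$ we have $g(x) = x \to \infty$ as $|x| \to \infty$ within $H$, whereas the periodicity of $Z$ supplies an unbounded sequence in $\{x_1 \leq -1\}$ on which $g$ remains bounded; consequently $g$ has no limit at $\infty$ and so has an essential singularity there.

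The main obstacle is the biLipschitz extension step, since a raw Zorich map is very far from biLipschitz on a plane because of its periodic and exponential character. One must therefore design the Zorich-type model carefully, absorbing its wildness into a preliminary biLipschitz correction before invoking the extension lemma, and then checking that the extension delivers a map of controlled dilatation compatible with the identity on the other face of the slab. This is precisely where the general biLipschitz extension result of the paper becomes indispensable: it accepts arbitrary biLipschitz data on the two parallel bounding planes of $S$ and produces a biLipschitz extension in between, which is the key ingredient that allows the gluing to succeed without inflating the dilatation.
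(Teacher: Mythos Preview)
Your overall architecture---identity on one half-space, Zorich-type map on a parallel half-space, interpolation across a slab---matches the paper's. But the proposal rests on a misreading of the extension result, and this is a genuine gap.

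Theorem~\ref{theorem:construction} is \emph{not} a statement about interpolating between biLipschitz data on two parallel unbounded planes. It takes a biLipschitz surjection $f:\partial A\to\partial B$ between the boundaries of two \emph{bounded} star domains with non-tangential star centres, and radially extends it to a biLipschitz surjection $\overline{A}\to\overline{B}$. Your slab $S=\{-1\le x_1\le 0\}$ is unbounded, so the theorem does not apply directly, and your sentence ``it accepts arbitrary biLipschitz data on the two parallel bounding planes of $S$ and produces a biLipschitz extension in between'' describes a result the paper does not prove.

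The paper circumvents exactly this issue by exploiting the double periodicity of the Zorich map. It works with $F(x)=x+Z(x)$ (not $Z$ alone), which on the top face of a bounded fundamental cuboid is a biLipschitz map onto a concrete polyhedron. The interpolation is then built on a single bounded cuboid $A=[0,2]^2\times[0,L]$ by repeated applications of Theorem~\ref{theorem:construction}: first to faces, then to the solid cuboids $A'$ and the four pieces $A''_i$, each time checking that the image is a bounded star domain with a non-tangential star centre. Boundary conditions (\ref{C})--(\ref{F}) are imposed precisely so that the map extends to the full slab by reflections and the period-$4$ translations. A non-obvious ``folding'' of the top face of $A'$ is needed to accommodate the rotation that the Zorich reflections induce on adjacent cells; without it the images of exterior faces would be figure-of-eight shapes rather than quadrilaterals.

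Your plan to ``absorb the wildness of $Z$ into a preliminary biLipschitz correction'' so that $Z|_{\{x_1=-1\}}$ becomes a biLipschitz self-map of a plane cannot work: $Z$ restricted to any plane transverse to its exponential direction is periodic and non-injective, so no bounded biLipschitz post-composition makes it biLipschitz. The paper's cure is to pass to a fundamental cell where $Z$ is injective, and this is what forces the periodicity/reflection bookkeeping above. If you revise along these lines---reduce to a bounded cell, use $Id+Z$, impose matching side conditions, and apply Theorem~\ref{theorem:construction} cell by cell---you recover the paper's argument.
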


The structure of this paper is as follows. First, in Section~\ref{Sdefs} we recall the definitions of quasiregularity, a useful metric, and the fast escaping set, and we give some known results required in the rest of the paper. In Section~\ref{Sspeed} we prove Theorem~\ref{Tspeed}, Theorem~\ref{Tspeed2}, Theorem~\ref{T2} and Corollary~\ref{Cor1}. 
In Section~\ref{Sconst} we prove Theorem~\ref{theorem:construction}, which concerns the extension of biLipschitz maps, and is used later in the proofs of Theorem~\ref{Texists} and Theorem~\ref{Texistshalf}. In Section~\ref{SZor} we define a Zorich map, in Section~\ref{Sexistshalf} we prove Theorem~\ref{Texistshalf}, and then in Section~\ref{Sexists} we prove Theorem~\ref{Texists}. Finally, in Section~\ref{Sexample} we give the examples mentioned above. 

\subsection*{Notation}
In much of this paper we work in $\R^3$. If $x \in \R^3$, then we adopt the notation $x = (x_1, x_2, x_3)$ without comment; for example, $\{ x_3 > 0\}$ denotes the half-space $\{ x = (x_1, x_2, x_3) \in \R^3 : x_3 > 0 \}$. 

If $x, y$ are distinct points of $\R^d$, then we write $L(x,y)$ for the straight line containing $x$ and $y$ extended to infinity in both directions, and $l(x,y)$ for the straight line segment from $x$ to $y$.

We denote the Euclidean distance from a point $x$ to a set $U \subset \R^d$ by $$\operatorname{dist}(x,U) := \inf_{y\in U} |x - y|.$$

We denote by $B(a,r)$ the open ball of Euclidean radius~$r$, centred at a point $a \in\R^d$.

%
%
%
%
%
%
%
\section{Definitions and background results}
\label{Sdefs}
We refer to \cite{MR1238941, MR950174} for a detailed treatment of quasiregular maps, and recall here the definition, and some properties used in this paper.

Suppose that $d\geq 2$, that $G \subset \R^d$ is a domain, and that $1 \leq p < \infty$. The \emph{Sobolev space} $W^1_{p,loc}(G)$ consists of those functions $f : G \to \R^d$ for which all first order weak partial derivatives exist and are locally in $L^p$. We say that $f$ is \emph{quasiregular} if $f \in W^1_{d,loc}(G)$ is continuous, and there exists $K_O \geq 1$ such that
\begin{equation}
\label{KOeq}
|D f(x)|^d \leq K_O J_f(x) \quad a.e.
\end{equation}
Here $D f(x)$ denotes the derivative,
$$
|D f(x)| := \sup_{|h|=1} |Df(x)(h)|
$$
is the norm of the derivative, and $J_f (x)$ denotes the Jacobian determinant. We also define
$$
\ell(Df(x)) := \inf_{|h|=1} |Df(x)(h)|.
$$

If $f$ is quasiregular, then there also exists $K_I \geq 1$ such that 
\begin{equation}
\label{KIeq}
K_I \ell(D f(x))^d \geq J_f(x) \quad a.e.
\end{equation}
The smallest constants $K_O$ and $K_I$ for which (\ref{KOeq}) and (\ref{KIeq}) hold are called the \emph{outer and inner dilatation} of $f$ and denoted by $K_O (f)$ and $K_I (f)$. 
We say that $f$ is \emph{$K$-quasiregular} if $\max\{K_I (f),K_O (f)\} \leq K$, for some $K \geq 1$.

A homeomorphism that satisfies (\ref{KOeq}) and (\ref{KIeq}) with $|J_f(x)|$ in place of $J_f(x)$ is called \emph{quasiconformal}. Note that, by this definition, a quasiconformal map may be orientation-reversing.

If $f$ and $g$ are quasiregular maps, and $f$ is defined in the range of $g$, then $f \circ g$ is quasiregular and \cite[Theorem II.6.8]{MR1238941}
\begin{equation}
\label{Keq}
K_I(f \circ g) \leq K_I(f)K_I(g).
\end{equation}

Many properties of holomorphic functions extend to quasiregular maps; in particular, non-constant quasiregular maps are open and discrete.

We use a result on the growth of the maximum modulus of a quasiregular map of transcendental type; see \cite[Lemma 3.4]{MR2248829}, \cite[Corollary 4.3]{MR1799670}. Here $M(R,f)$ denotes the \emph{maximum modulus function} $$M(R,f) := \max_{|x| = R} |f(x)|, \qfor R>0.$$ 
\begin{lemma}
\label{Blemm}
Suppose that $f : \mathbb{R}^d \to \mathbb{R}^d$ is a quasiregular map of transcendental type. Then
$$
\lim_{r\rightarrow\infty} \frac{\log M(r, f)}{\log r} = \infty.
$$
\end{lemma}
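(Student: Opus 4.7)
The plan is to argue by contradiction. Suppose $\liminf_{r \to \infty} \log M(r, f)/\log r = \alpha < \infty$; then there exists a sequence $r_n \to \infty$ with $M(r_n, f) \leq r_n^{\alpha+1}$. The goal is to conclude that $f$ extends to a quasiregular self-map of the one-point compactification $S^d = \R^d \cup \{\infty\}$ with $\hat f(\infty) = \infty$ --- that is, $f$ has a pole at infinity rather than an essential singularity, contradicting the transcendental-type hypothesis.

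The main technical step is to propagate the sequential polynomial bound $M(r_n, f) \leq r_n^{\alpha+1}$ to a uniform polynomial bound $M(r, f) \leq A r^B$ for all $r \geq 1$. This step cannot be carried out by appealing to monotonicity of $M(\cdot, f)$ alone, because the sequence $(r_n)$ can a priori be arbitrarily sparse. Instead, one uses conformal capacity and path-modulus inequalities specific to quasiregular maps: the modulus of a path family is distorted by $f$ by at most a factor controlled by $K_I(f)$, and comparing the modulus of an annular condenser around the origin with the modulus of its image inside $\overline{B(0, M(r_n, f))}$ yields quantitative upper bounds on $M(r, f)$ at intermediate radii $r_{n-1} < r \leq r_n$ in terms of $M(r_n, f)$ and $r_n$. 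Summing or iterating such estimates produces the desired global polynomial bound; this is essentially the content of the quoted results in \cite{MR2248829, MR1799670}.

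Once uniform polynomial growth is established, the second step is to apply the removable-singularity theorem for quasiregular maps. After composing with inversion at infinity, a polynomially-bounded $f$ gives rise to a quasiregular map near the origin whose singularity can be removed (see Rickman's monograph \cite{MR1238941}, Chapter III). This extends $f$ continuously to a quasiregular self-map $\hat f : S^d \to S^d$ with $\hat f(\infty) = \infty$, meaning that $\infty$ is a pole of $f$ rather than an essential singularity. This contradicts the assumption that $f$ is of transcendental type.

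The main obstacle is Step~1: promoting the bound from a sparse sequence to all sufficiently large radii genuinely requires the quasiregular machinery of capacity and modulus distortion, rather than any elementary monotonicity argument. Step~2 is then a standard application of removable singularities in the quasiregular category.
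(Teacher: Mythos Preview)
The paper does not supply its own proof of this lemma; it merely quotes the statement, citing \cite[Lemma~3.4]{MR2248829} and \cite[Corollary~4.3]{MR1799670}. There is therefore no in-paper argument to compare your sketch against.

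That said, your outline is in the spirit of how such results are established in the cited references. One point of imprecision worth flagging: in Step~2, a uniform bound $M(r,f)\le A r^{B}$ does not by itself make $\iota\circ f\circ\iota$ \emph{bounded} near the origin (for that you would need a lower bound on $|f(x)|$ for large $|x|$), so the plainest form of the removable-singularity theorem does not apply directly. The standard way to close this gap is to invoke either the degree/counting-function estimates in Rickman's book---polynomial growth of $M(r,f)$ forces the topological degree to be finite, and a finite-degree quasiregular map extends to $S^{d}$---or a capacity version of the removability theorem for quasimeromorphic maps into $S^{d}$. With that adjustment your plan goes through, and you are right that Step~1 (upgrading a sequential bound to a uniform one via modulus/capacity distortion) is where the genuinely quasiregular machinery enters.
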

As in \cite{SixsmithNicks1}, we use a certain conformal invariant that is useful when working with quasiregular maps. The definition of this invariant is complicated, so we simply state the properties we use, and refer to \cite{MR950174} for full details.

Let $G \subset \R^d$ be a domain. Vuorinen \cite[p.103]{MR950174} defines a function
$$
  \mu_G : G \times G \to \R
$$
with the properties that $\mu_G$ is a conformal invariant, and is a metric if cap $\partial G > 0$. It is noted that if $D \subset G$ is a domain, then
\begin{equation}
\label{smallerdomain}
\mu_D(x,y) \geq \mu_G(x,y), \qfor  x, y \in D.
\end{equation}

We use the following \cite[Theorem 10.18]{MR950174}. 
\begin{lemma}
\label{l3}
Suppose that $f : G \to \mathbb{R}^d$ is a non-constant quasiregular map. Then
$$
\mu_{f(G)}(f(a), f(b)) \leq K_I(f) \mu_G(a, b), \qfor a, b \in G.
$$
\end{lemma}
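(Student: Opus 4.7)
The plan is to unpack the definition of $\mu_G$ in terms of condenser capacity (or equivalently, the modulus of a curve family) and then invoke the standard $K_I$-inequality for moduli under quasiregular maps. Recall that Vuorinen defines $\mu_G(a,b)$ essentially as the infimum of $\operatorname{cap}(G, E)$ (up to a fixed monotone normalisation) over all continua $E \subset G$ containing both $a$ and $b$; the condenser capacity itself is expressible as the $d$-modulus $M(\Gamma(E,\partial G; G))$ of the family of curves joining $E$ to $\partial G$ inside $G$.

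First I would fix an arbitrary admissible continuum $E \subset G$ containing $a$ and $b$. Since $f$ is non-constant and hence open, continuous and sends $G$ into $f(G)$, the set $f(E)$ is a continuum in $f(G)$ containing $f(a)$ and $f(b)$, and is therefore an admissible competitor for $\mu_{f(G)}(f(a), f(b))$. Consequently,
$$
\mu_{f(G)}(f(a), f(b)) \leq \operatorname{cap}(f(G), f(E)).
$$

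Next I would apply the Poletsky--V\"ais\"al\"a inequality: for a non-constant quasiregular map $f : G \to \R^d$ and any condenser $(G, E)$, one has
$$
\operatorname{cap}(f(G), f(E)) \leq K_I(f) \operatorname{cap}(G, E).
$$
The proof of this inequality is the technical core of the argument and the main obstacle: one must pass moduli of curve families through $f$, and since $f$ need not be injective, one has to use a path-lifting argument for discrete open maps (covering multiplicities and the fact that $J_f \ge \ell(Df)^d/K_I$ a.e.) to compare $M(\Gamma(f(E),\partial f(G); f(G)))$ with $M(\Gamma(E, \partial G; G))$. This is precisely the content of the Poletsky--V\"ais\"al\"a inequality as developed in \cite{MR1238941, MR950174}.

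Combining the two displayed inequalities and taking the infimum over all admissible continua $E$ containing $a$ and $b$ yields
$$
\mu_{f(G)}(f(a), f(b)) \leq K_I(f) \mu_G(a,b),
$$
as required. Since the final result is stated in the excerpt as a direct quotation of \cite[Theorem 10.18]{MR950174}, in practice one would simply cite that theorem rather than reproducing the path-lifting machinery; my proposed route above indicates how a self-contained proof would be organised.
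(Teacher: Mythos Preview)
Your proposal is correct, and you have already identified the situation accurately in your final paragraph: the paper does not prove this lemma at all but simply quotes it as \cite[Theorem~10.18]{MR950174}. Your sketch via condenser capacity and the Poletsky--V\"ais\"al\"a $K_I$-inequality is exactly the argument one finds in Vuorinen's book, so there is nothing to compare beyond noting that the paper treats the result as a black box while you have outlined its internal structure.
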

We also use the following estimate for $\mu_G$ \cite[Theorem 8.31]{MR950174}. Here, the boundary of $G$ taken in $\overline{\R^d}$ is denoted by $\partial_\infty G$.
\begin{lemma}
\label{muestimate}
Suppose that $G$ is a proper subdomain of $\mathbb{R}^d$ and that $\partial_\infty G$ is connected. Then there exists a constant $c_d$, which depends only on $d$, such that
$$
\mu_G(a, b) \geq c_d \log\left(1 + \frac{|a-b|}{\min\{\operatorname{dist}(a, \partial G), \operatorname{dist}(b, \partial G)\}}\right), \qfor a, b \in G.
$$
\end{lemma}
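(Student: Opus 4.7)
The plan is to use a classical ring-domain capacity estimate together with the conformal invariance of $\mu_G$. Recall that $\mu_G(a,b)$ can be realised as the infimum, over continua $C \subset G$ containing both $a$ and $b$, of the conformal capacity of the condenser $(G,C)$. Because $\partial_\infty G$ is connected, the set $\overline{\R^d} \setminus G$ is itself a continuum, so $(G,C)$ is a genuine ring-type condenser whose two complementary components are both continua; this is exactly the geometric setting in which the capacity admits a lower bound in terms of the relative geometry of these continua.

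First I would assume without loss of generality that $d_0 := \operatorname{dist}(a, \partial G) \leq \operatorname{dist}(b, \partial G)$, and choose $a^* \in \partial G$ with $|a - a^*| = d_0$. To standardise the configuration I would apply a Möbius transformation sending $a^*$ to $\infty$; by the conformal invariance of $\mu_G$, it is enough to estimate the invariant in the image domain, whose complement is now an unbounded continuum meeting $\infty$. In this normalised picture, every admissible continuum $C$ through $a$ and $b$ has Euclidean diameter at least $|a-b|$, and lies at distance at most $d_0$ from the unbounded complementary continuum.

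Next, for any such $C$, I would bound $\operatorname{cap}(G,C)$ from below by the capacity of a Teichm\"uller-type ring whose bounded component has diameter at least $|a-b|$ and whose unbounded component approaches within distance $d_0$ of it. The standard capacity estimate for Teichm\"uller rings (as developed in \cite{MR950174}) then gives a bound of the form $c_d \log\bigl(1 + |a-b|/d_0\bigr)$ with $c_d$ depending only on $d$. Taking the infimum over the admissible continua $C$ produces the claimed lower bound on $\mu_G(a,b)$.

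The main obstacle is to verify that the constant $c_d$ obtained in this way really depends only on the dimension and not on any finer features of $G$ beyond the connectedness of $\partial_\infty G$ and the two Euclidean quantities $|a-b|$ and $d_0$. Conceptually, this is equivalent to showing that among all domains with connected boundary in $\overline{\R^d}$ and with $a,b$ positioned at the given Euclidean distances from the boundary, the Teichm\"uller extremal configuration is the worst case; combined with the classical monotonicity and symmetrisation properties of conformal capacity, this is what ultimately produces a dimension-only constant.
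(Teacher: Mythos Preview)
The paper does not prove this lemma; it simply quotes it as \cite[Theorem~8.31]{MR950174}. So any comparison is with Vuorinen's argument rather than with something written in the present paper.

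Your overall plan --- write $\mu_G(a,b)$ as an infimum of condenser capacities $\operatorname{cap}(G,C)$ over continua $C\subset G$ through $a,b$, and bound each from below via a Teichm\"uller-ring comparison --- is the standard route and is essentially how Vuorinen proceeds. But the M\"obius step as you describe it does not work. You assert that after sending $a^*$ to $\infty$, ``every admissible continuum $C$ through $a$ and $b$ has Euclidean diameter at least $|a-b|$, and lies at distance at most $d_0$ from the unbounded complementary continuum.'' M\"obius maps do not preserve Euclidean distances: in the image domain the original numbers $|a-b|$ and $d_0$ have no meaning, and conformal invariance only tells you $\mu_{G'}(a',b')=\mu_G(a,b)$, not that $|a'-b'|=|a-b|$ or $\operatorname{dist}(a',\partial G')=d_0$. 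A Teichm\"uller bound applied in the image then produces an estimate in the \emph{image} Euclidean data, and you have given no mechanism to translate it back to the original $|a-b|$ and $d_0$.

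The usual argument avoids this by working directly in $G$. Connectedness of $\partial_\infty G$ forces $\overline{\R^d}\setminus G$ to be a single continuum containing both $a^*$ and $\infty$; for any admissible $C$ the ring therefore has one plate containing $a,b$ and the other containing $a^*,\infty$, and one invokes the four-point lower bound for ring capacity (the Teichm\"uller extremality statements in \cite{MR950174}) with those four points. The M\"obius invariance you want to exploit is already packaged inside that lemma --- it is applied to a M\"obius-invariant cross-ratio of $a,b,a^*,\infty$, not to raw Euclidean distances --- and an elementary estimate using $|a-a^*|=d_0$ converts the cross-ratio bound into $c_d\log(1+|a-b|/d_0)$.
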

We use the fact, which is straightforward to prove, that if $G$ is a full subdomain of $\R^d$, then $\partial_\infty G$ is connected. \\

Suppose that $f : \R^d \to \R^d$ is quasiregular. 
The \emph{fast escaping set} is a subset of the escaping set, and is defined by
\begin{equation}
\label{Adef}
A(f) := \{x : \text{there exists } \ell \isnatural \text{ such that } |f^{k+\ell}(x)| \geq M^k(R, f), \text{ for } k \isnatural\}.
\end{equation}
Here $M^k(R, f)$ denotes the $k$th iterate of $M(R, f)$ with respect to the first variable, and $R > 0$ can be taken to be any value such that $M^k(R,f)\rightarrow\infty$ as $k\rightarrow\infty$. For a quasiregular map $f$ of transcendental type, this definition was first used in \cite{MR3215194}, where it was shown that $A(f)$ is independent of the choice of $R$.

We need the following properties of the fast escaping set, which combine \cite[Theorem 1.2]{MR3215194} and \cite[Theorem~1.1 and Theorem 1.2]{MR3265357}. 
\begin{lemma}
\label{lemm:JA}
Suppose that $f : \R^d \to \R^d$ is a quasiregular map of transcendental type. Then $A(f)$ is non-empty and has no bounded components, and $J(f) \subset \partial A(f)$. If, in addition, 
\begin{equation}
\label{12A}
\liminf_{R\rightarrow\infty} \frac{\log\log M(R,f)}{\log\log R} = \infty,
\end{equation}
then $J(f) = \partial A(f)$.
\end{lemma}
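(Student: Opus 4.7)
My plan is to establish the four assertions separately, using the growth estimate of Lemma~\ref{Blemm} as the driving force behind the construction of fast-escaping orbits.

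For $A(f) \ne \emptyset$, Lemma~\ref{Blemm} provides $R_0$ with $M(R_0,f) > R_0$, so $M^k(R_0,f) \to \infty$. I would then invoke the Rickman--Picard-type surjectivity theorem for quasiregular maps: the image of any neighbourhood of $\infty$ omits only a set of capacity zero. In particular, the preimage of the large annulus $\{|y| \geq M^{k+1}(R_0,f)\}$ meets $\{|x|\geq M^k(R_0,f)\}$. An inductive compactness argument---choosing suitable preimages to build a nested sequence of compact sets whose intersection gives a single orbit with $|f^k(x_0)|\geq M^k(R_0,f)$ for every $k$---then produces a point of $A(f)$.

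To rule out bounded components of $A(f)$, I would argue by contradiction. Let $K$ be a bounded component and note that, by \eqref{Adef}, the forward iterates diverge to infinity on $K$ at a fixed pace. Using openness and discreteness of $f$ together with the $\mu_G$-contraction of Lemma~\ref{l3}, one transfers estimates from $f^k(K)$ to $f^k(W)$ for a slightly larger neighbourhood $W$ of $K$; the Rickman--Picard theorem then forces points of $W\setminus K$ to also enter the fast-escaping regime, contradicting that $K$ is a component. For $J(f) \subset \partial A(f)$, I would first observe that $y \in f^{-k}(A(f))$ implies $y \in A(f)$ directly from \eqref{Adef} (via a shift of $\ell$). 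Hence, for $x \in J(f)$ and a neighbourhood $V$, the definition \eqref{Jdef} together with the fact that $A(f)$ has positive capacity (indeed, full outer capacity by the unboundedness just proved) yields $V \cap A(f) \ne \emptyset$, so $x \in \overline{A(f)}$. To see that $x \notin \operatorname{int}(A(f))$, one shows $\operatorname{int}(A(f)) \subset QF(f)$: on an open $V \subset A(f)$, the common escape rate combined with Lemma~\ref{muestimate} produces sufficient equicontinuity of $\{f^k\}$ to place $V$ in the quasi-Fatou set.

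For the equality $J(f) = \partial A(f)$ under \eqref{12A}, I must show the reverse inclusion $\partial A(f) \subset J(f)$. Arguing contrapositively, suppose $x \in \partial A(f) \cap QF(f)$ and let $U$ be the component of $QF(f)$ containing $x$. Since $x \in \partial A(f)$, $U$ meets $A(f)$. The stronger growth assumption~\eqref{12A} implies that for any $c>0$ one has $M(cR,f) \geq M(R,f)^2$ for all large $R$, which (combined with Lemma~\ref{l3} applied to comparing orbits within $U$) lets the fast-escape property propagate across $U$, forcing $U \subset A(f)$ and hence $x \in \operatorname{int}(A(f))$, a contradiction. The main obstacle throughout is that, unlike in complex dynamics, one cannot appeal to the maximum principle, Koebe-type distortion, or the argument principle; these must be replaced by the conformal invariant $\mu_G$ via Lemmas~\ref{l3} and~\ref{muestimate}, capacity-based surjectivity theorems for transcendental-type quasiregular maps, and the sharp growth estimate of Lemma~\ref{Blemm}---with the propagation step under \eqref{12A} being the most delicate ingredient.
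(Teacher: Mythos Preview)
The paper does not prove Lemma~\ref{lemm:JA} at all: it is stated as a combination of results quoted directly from the literature (\cite[Theorem~1.2]{MR3215194} and \cite[Theorems~1.1 and~1.2]{MR3265357}). So there is no ``paper's own proof'' to compare against beyond those citations.

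That said, your sketch has two genuine gaps.

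\emph{No bounded components.} Your plan is to take a bounded component $K$, enlarge it to a neighbourhood $W$, and use Lemma~\ref{l3} and Rickman--Picard to force points of $W\setminus K$ into $A(f)$, ``contradicting that $K$ is a component.'' But even if every point of $W$ lay in $A(f)$, this would only show that components of $A(f)$ are open --- which is false in general, since $A(f)$ need not be open. More to the point, finding a single point of $(W\setminus K)\cap A(f)$ does not produce a connected subset of $A(f)$ strictly containing $K$: the set $W\cap A(f)$ has no reason to be connected. The actual proofs in the cited papers do not argue this way; they show that the closed ``level sets'' $A_R^L(f)$ each contain unbounded continua, built by pulling back large annuli via covering arguments, and then assemble $A(f)$ from these.

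\emph{The inclusion $\operatorname{int} A(f)\subset QF(f)$.} You write that uniform fast escape on an open $V\subset A(f)$, together with Lemma~\ref{muestimate}, gives ``sufficient equicontinuity of $\{f^k\}$ to place $V$ in the quasi-Fatou set.'' But in this paper $QF(f)$ is \emph{not} defined via normality or equicontinuity: it is the complement of $J(f)$, and $J(f)$ is defined by the capacity blow-up condition~\eqref{Jdef}. Equicontinuity, even if you had it, does not directly yield $\operatorname{cap}\bigl(\R^d\setminus\bigcup_k f^k(U)\bigr)>0$. (Nor is it clear how Lemmas~\ref{l3}--\ref{muestimate} give equicontinuity here: the inequality $\mu_{f^k(G)}(f^k(a),f^k(b))\le K_I(f^k)\mu_G(a,b)$ has a constant that grows with $k$.) The correct route is to observe that on a compact $\bar U\subset A(f)$ the iterates tend to infinity uniformly, and then to argue directly that $\bigcup_k f^k(U)$ omits a set of positive capacity.

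Your argument that $J(f)\subset\overline{A(f)}$ via complete invariance of $A(f)$ and positivity of its capacity is correct, and the outline for $A(f)\ne\emptyset$ is plausible though thin. The final step under~\eqref{12A} is stated too vaguely to assess.
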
 
%
%
%
%
%
%
\section{Results concerning rate of escape}
\label{Sspeed}
\begin{proof}[Proof of Theorem~\ref{Tspeed}]
Suppose that $f : \R^d \to \R^d$ is a quasiregular map of transcendental type, and that $U$ is a full $p$-periodic domain. Choose $x_0 \in \partial U$. \\ 

We note first the following. Suppose that $V \subset U$ is a domain. Let $c_d$ be the constant in Lemma~\ref{muestimate}. It follows, by (\ref{smallerdomain}) and Lemma~\ref{muestimate}, that
\begin{align*}
\mu_V(a, b) &\geq \mu_{U}(a, b) \\
            &\geq c_d \log\left(1+\frac{|a-b|}{|b-x_0|}\right) \\ 
            &\geq c_d \log\frac{|a-x_0|}{|b-x_0|}, \qfor a,b \in V.
\end{align*}
Hence 
\begin{equation}
\label{musep}
|a-x_0| \leq |b-x_0| \exp\left(\frac{\mu_{V}(a, b)}{c_d}\right), \qfor a, b \in V.
\end{equation}

Clearly, $f^{kp}(U) \subset f^{(k-1)p}(U) \subset U$, for $k \isnatural$. Suppose that $x \in U$. We deduce by (\ref{Keq}), (\ref{musep}) and Lemma~\ref{l3} that
\begin{align*}
|f^{kp}(x)-x_0| &\leq |f^{(k-1)p}(x)-x_0| \exp\left(\frac{\mu_{f^{(k-1)p}(U)}(f^{kp}(x), f^{(k-1)p}(x))}{c_d}\right) \\
                 &\leq |f^{(k-1)p}(x)-x_0| \exp\left(\frac{ K_I(f^{(k-1)p}) \mu_U(f^p(x),x)}{c_d}\right) \\
                 &\leq |f^{(k-1)p}(x)-x_0| \exp\left(\frac{ K_I(f^p)^{k-1} \mu_U(f^p(x),x)}{c_d}\right), \qfor k\isnatural.
\end{align*}
Hence
\begin{equation}
\label{almostdone}
|f^{kp}(x)-x_0| \leq |x-x_0|  \exp \left(\frac{\mu_U(f^p(x),x)}{c_d}\sum_{j=0}^{k-1} K_I(f^p)^{j}\right), \qfor k\isnatural,
\end{equation}
and (\ref{Tspeedeq1}) follows.
\end{proof}
\begin{proof}[Proof of Theorem~\ref{Tspeed2}]
Suppose that $f : \R^d \to \R^d$ is a quasiregular map of transcendental type, that $U$ is a full $p$-periodic domain that meets $I(f)$, and that $f$ is locally uniformly quasiregular in $U$. Choose $x_0 \in \partial U$. \\

Suppose that $x \in U$. Let $D$ be a bounded domain, compactly contained in $U$, such that $x \in D$ and $f^p(x) \in D$. It follows, by a compactness argument, that $f$ is uniformly quasiregular in $D$. This, in turn, implies that $f$ is uniformly quasiregular in the domain
\begin{equation}
\label{Veq}
V := \bigcup_{k \geq 0} f^{kp}(D) \subset U.
\end{equation}

We deduce by (\ref{musep}) and Lemma~\ref{l3} that there exists $K \geq 1$ such that
\begin{align*}
|f^{kp}(x)-x_0| 
                &\leq |f^{(k-1)p}(x)-x_0| \exp\left(\frac{ K \mu_V(f^p(x),x)}{c_d}\right), \qfor k\isnatural,
\end{align*}
in which case
\begin{equation*}
|f^{kp}(x)-x_0| \leq |x-x_0|  \exp \left(\frac{kK\mu_V(f^p(x),x)}{c_d} \right), \qfor k\isnatural.
\end{equation*}

Equation (\ref{TAeq1}) follows. \\ 

To prove (\ref{TAeq2}), suppose that $X \subset U$ is compact. Let $D$ be a bounded domain, compactly contained in $U$, such that $X \cup f^p(X) \subset D$, and let $V$ be as in (\ref{Veq}). Set
$$
L := \sup_{x, x' \in X} \mu_V (x, x').
$$
Note that it follows from the compactness of $X$, together with \cite[Theorem 1]{MR752460} (and see also \cite[Theorem 2]{MR0302904}), that $L < \infty$. We deduce by (\ref{musep}) and Lemma~\ref{l3} that there exists $K \geq 1$ such that
\begin{align*}
|f^{kp}(x')-x_0| &\leq |f^{kp}(x)-x_0| \exp\left(\frac{ K L}{c_d}\right), \qfor x, x' \in X, \ k\isnatural.
\end{align*}
Equation (\ref{TAeq2}) follows. The fact that the iterates of $f$ tend locally uniformly to infinity in $U$ also follows. \\

Finally, let $\xi_0 \in U$ and let $\Gamma_0 \subset U$ be a curve joining $\xi_0$ to $f^{p}(\xi_0)$ such that $0~\notin~\bigcup_{k=0}^\infty f^{kp}(\Gamma_0)$. If $x \in \bigcup_{k=0}^\infty f^{kp}(\Gamma_0)$, then we can write $x = f^{kp}(\xi)$, for some $\xi \in \Gamma_0$ and $k \geq 0$. We then obtain (\ref{TAeq3}) by setting $X = \Gamma_0 \cup f^p(\Gamma_0)$ and applying (\ref{TAeq2}) to the points $\xi$ and $f^p(\xi)$.
\end{proof}
%

To prove Theorem~\ref{T2} we require the following, which is a generalisation of \cite[Theorem 2.2 (c)]{Rippon01102012} to quasiregular maps of transcendental type.
\begin{lemma}
\label{lemm:inA}
Suppose that $f : \R^d \to \R^d$ is a quasiregular map of transcendental type, and that $x \in A(f)$. Then
\begin{equation}
\label{xinAeq}
\lim_{k\rightarrow\infty} \frac{1}{k}\log \log |f^{k}(x)| = \infty.
\end{equation}
\end{lemma}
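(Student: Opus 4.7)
The approach is to combine the definition~(\ref{Adef}) of the fast escaping set with the super-polynomial growth of $M(r,f)$ supplied by Lemma~\ref{Blemm}. The key observation is that, since $M(r,f)$ eventually dominates any fixed power of $r$, its iterates must grow super-exponentially, and the definition of $A(f)$ transfers this rate to every point of $A(f)$.

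First, I would fix an arbitrary constant $L > 1$. By Lemma~\ref{Blemm}, there exists $R_0 > 1$ such that $M(r, f) \geq r^L$ for all $r \geq R_0$. Choose $R \geq R_0$ with $M^k(R, f) \to \infty$ as $k \to \infty$; noting that $M(r,f) \geq r^L \geq r$ for $r \geq R_0$, the sequence $M^k(R, f)$ is non-decreasing and stays above $R_0$. A straightforward induction then gives
$$
M^{k+1}(R, f) \geq \left(M^k(R, f)\right)^L, \qfor k \geq 0,
$$
and hence $\log M^k(R, f) \geq L^k \log R$ for all $k \geq 0$.

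Next, since $x \in A(f)$, by~(\ref{Adef}) there exists $\ell \isnatural$ such that $|f^{k+\ell}(x)| \geq M^k(R, f)$ for all $k \isnatural$. Combining this with the previous estimate and taking logarithms twice yields
$$
\log \log |f^{k+\ell}(x)| \geq k \log L + \log \log R, \qfor k \isnatural.
$$
Dividing by $k+\ell$ and letting $k \to \infty$ shows that $\liminf_{j \to \infty} \tfrac{1}{j} \log \log |f^j(x)| \geq \log L$. Since $L > 1$ was arbitrary, the liminf is $+\infty$, which establishes~(\ref{xinAeq}).

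This is a direct adaptation of the corresponding argument from complex dynamics; no genuine obstacle arises in the quasiregular setting. The only mild care needed is in selecting $R$ large enough that the power bound $M(r,f) \geq r^L$ can be applied at every iterate $M^k(R,f)$, which is handled automatically once $R \geq R_0$.
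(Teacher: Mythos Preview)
Your proof is correct and follows essentially the same approach as the paper: both combine the definition~(\ref{Adef}) of $A(f)$ with the fact, drawn from Lemma~\ref{Blemm}, that $\tfrac{1}{k}\log\log M^k(R,f)\to\infty$. The paper simply asserts this growth as~(\ref{Mgrows}), whereas you spell out the power-bound induction that underlies it; otherwise the arguments coincide.
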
 
\begin{proof}
Let $R>0$ be sufficiently large that $M^k(R, f)\rightarrow\infty$ as $k\rightarrow\infty$. It follows by Lemma~\ref{Blemm} that
\begin{equation}
\label{Mgrows}
\lim_{k\rightarrow\infty} \frac{1}{k} \log \log M^k(R, f) = \infty.
\end{equation}

Suppose that $x \in A(f)$. Then, by (\ref{Adef}), there exists $\ell\isnatural$ such that, for all sufficiently large $k\isnatural$,
\begin{equation*}
\frac{1}{k}\log \log |f^{k}(x)| \geq \frac{1}{k}\log \log M^{k-\ell}(R, f).
\end{equation*}
The result follows by (\ref{Mgrows}).
\end{proof} 
\begin{remark}\normalfont
When $f$ is a {\tef}, points that satisfy (\ref{xinAeq}) are said to \emph{zip to infinity}. The set of points with this property was studied in \cite{MR1767705}.
\end{remark} 
\begin{proof}[Proof of Theorem~\ref{T2}]
Suppose that $f : \mathbb{R}^d \to \mathbb{R}^d$ is a quasiregular map of transcendental type, and that $U$ is a component of $QF(f)$ that is periodic or pre-periodic. \\

Suppose first that $U \cap A(f) \ne \emptyset$. By way of contradiction, we suppose that $U$ is full. There exists $k\isnatural$ such that the component of $QF(f)$ that contains $f^k(U)$, $V$ say, is $p$-periodic. By \cite[Corollary 5.2]{SixsmithNicks1} $V$ is full, and it is easy to see that $V$ meets $A(f)$. Take $x \in V \cap A(f)$, in which case, by Lemma~\ref{lemm:inA}, equation (\ref{xinAeq}) holds. This is in contradiction to (\ref{Tspeedeq1}). \\

In the other direction, suppose that $U$ is hollow. We deduce by \cite[Theorem 1.3]{SixsmithNicks1} that $U$ is unbounded. It then follows by \cite[Theorem 1.4]{SixsmithNicks1} that $U$ has no unbounded complementary components. Since, by Lemma~\ref{lemm:JA}, $A(f)$ has at least one unbounded component, it follows that $U$ meets $A(f)$.
\end{proof}
As mentioned in the introduction, Theorem~\ref{T2} has the following corollary. 
\begin{corollary}
\label{Cor1}
Suppose that $f : \mathbb{R}^d \to \mathbb{R}^d$ is a quasiregular map of transcendental type such that (\ref{12A}) is satisfied. Suppose also that $U$ is a hollow component of $QF(f)$. Then $U \subset A(f)$.
\end{corollary}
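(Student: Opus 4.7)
The plan is to combine Lemma~\ref{lemm:JA}, which under hypothesis~(\ref{12A}) upgrades to the equality $J(f) = \partial A(f)$, with the direction of Theorem~\ref{T2} that gives $U \cap A(f) \neq \emptyset$, via a short connectedness argument.

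First, since $U \subset QF(f) = \mathbb{R}^d \setminus J(f)$ and $J(f) = \partial A(f)$ under~(\ref{12A}), I would observe that $U \cap \partial A(f) = \emptyset$. Consequently, each point of $U$ lies either in $\operatorname{int} A(f)$ or in $\operatorname{int}(\mathbb{R}^d \setminus A(f))$, and these two open sets decompose $U$ into a disjoint union of relatively open subsets.

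Next, I would verify that $U$ meets $A(f)$. The second half of the proof of Theorem~\ref{T2} does precisely this: using that $U$ is a hollow component of $QF(f)$, Theorems~1.3 and~1.4 of~\cite{SixsmithNicks1} show that $U$ is unbounded with only bounded complementary components, and then any unbounded component of $A(f)$ (which exists by Lemma~\ref{lemm:JA}) must intersect $U$, since otherwise it would be confined to a bounded complementary component of $U$. Crucially, this portion of the argument does not use periodicity.

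Finally, pick $x \in U \cap A(f)$. Since $x \in A(f)$ and $x \notin \partial A(f) = J(f)$, we have $x \in \operatorname{int} A(f)$. Hence $U \cap \operatorname{int} A(f) \neq \emptyset$, so the connectedness of $U$ forces $U \cap \operatorname{int}(\mathbb{R}^d \setminus A(f)) = \emptyset$, yielding $U \subset \operatorname{int} A(f) \subset A(f)$. The only mild subtlety is confirming that the ``hollow $\Rightarrow$ meets $A(f)$'' direction of Theorem~\ref{T2} applies to any hollow component of $QF(f)$, not merely to periodic or pre-periodic ones; the proof as written relies only on topological features of hollow domains and on Lemma~\ref{lemm:JA}, so no additional work is needed.
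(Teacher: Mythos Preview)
Your argument follows essentially the paper's route: establish that $U$ meets $A(f)$, then use $J(f)=\partial A(f)$ under~(\ref{12A}) to upgrade this to $U\subset A(f)$ (your connectedness argument is exactly what ``the final part of Lemma~\ref{lemm:JA}'' is invoked for). The one difference is in how the case split is organised. The paper first treats the case that $U$ is \emph{bounded}, where \cite[Theorem~1.3]{SixsmithNicks1} gives $U\subset A(f)$ directly; only in the unbounded case does it appeal to \cite[Theorem~1.4]{SixsmithNicks1}, which actually yields that $U$ is \emph{periodic}, so that Theorem~\ref{T2} can be quoted as a black box rather than unpacked. Your assertion that the ``hollow $\Rightarrow$ unbounded'' step inside the proof of Theorem~\ref{T2} is purely topological and avoids periodicity is not quite accurate --- that deduction goes through \cite[Theorem~1.3]{SixsmithNicks1}, which uses the (pre-)periodicity hypothesis there. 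This does not break your proof, however, since for a bounded hollow $U$ the same cited theorem already gives $U\subset A(f)$; you just need to make that case split explicit, as the paper does.
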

\begin{proof}[Proof of Corollary~\ref{Cor1}]
%
If $U$ is bounded, then it follows by \cite[Theorem 1.3]{SixsmithNicks1} that $U \subset A(f)$. Hence we can assume that $U$ is unbounded, in which case, by \cite[Theorem 1.4]{SixsmithNicks1}, $U$ is periodic. It follows by Theorem~\ref{T2} that $U$ meets $A(f)$. It then follows by the final part of Lemma~\ref{lemm:JA} that $U \subset A(f)$, as required.
\end{proof}
%
%
%
%
%
%
%
\section{Radial extension of biLipschitz maps}
\label{Sconst}
The goal of this section is to prove a result, Theorem~\ref{theorem:construction} below, that is used to construct the functions described in Theorem~\ref{Texists} and Theorem~\ref{Texistshalf}. Although there are related results in the literature -- see, for example, \cite{MR3365742, MR3195912, MR595177} -- the authors are not aware of a reference for this result. 

We require some preliminary results and notation. For generality, we give these results in $\R^d$.
\begin{definition}
A domain $A\subset \R^d$ is called a \emph{star domain} if there exists $a\in A$ such that, for all $x \in A\setminus\{a\}$, the line segment $l(a,x)$ lies in $A$. Such a point is called a \emph{star centre} of $A$.
\end{definition}

We need to be able to choose a star centre with a certain property, and so we introduce the following definition.
\begin{definition}
\label{def:1}
Suppose that $A\subset \R^d$ is a \emph{bounded} star domain with star centre $a$. 
We say that $a$ is a \emph{non-tangential} star centre if there exist $\theta\in (0, \pi/4)$ and $\epsilon > 0$ with the following property. For all  points $w, v \in \partial A$ such that $0 < |w-v| < \epsilon$, the acute angle between the lines $L(a,w)$ and $L(w,v)$ is greater than $\theta$.
\end{definition}
Note that an equivalent definition is that for each point $w\in\partial A$, the boundary of $A$ only meets the double-cone with vertex $w$, axis through $a$, and aperture $2\theta$, either at the point $w$ or outside the ball $B(w, \epsilon)$. 

It follows from the definitions that if $A$ is a bounded star domain, $a$ is a non-tangential star centre of $A$, and $x \in A\setminus\{a\}$, then $L(a,x)$ meets $\partial A$ at exactly two points. One of these points, which we denote by $\hat{x}$, is such that $x \in l(a, \hat{x})$. We define a surjection $\psi_{A} : \overline{A}\setminus\{a\} \to \partial A$ by
$$
\psi_A(x) := 
\begin{cases}
\hat{x}, \qfor x \in A\setminus\{a\}, \\ x, \qfor x \in \partial A.
\end{cases}
$$

Clearly $\psi_A$ depends on the choice of star centre, but we suppress this for clarity; the choice of star centre will always be clear from the context. We require the following property of $\psi_A$.
\begin{lemma}
\label{glemma}
Suppose that $A \subset \R^d$ is a bounded star domain with a non-tangential star centre $a$. Then $\psi_{A}$ is locally Lipschitz. Indeed, there exist $\eta \in (0, 1)$ and $T > 0$ such that for all $\xi \in \overline{A}\setminus\{a\}$,
\begin{equation}
\label{glemmalip}
|\psi_A(x) - \psi_A(y)| \leq \frac{T}{|\xi-a|}|x - y|, \qfor x, y \in B(\xi, \eta |\xi - a|) \cap \overline{A}.
\end{equation}
\end{lemma}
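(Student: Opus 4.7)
The plan is to reduce the lemma to showing that the radial function $\rho : S^{d-1} \to (0, \infty)$, defined so that $a + \rho(u) u$ is the unique point of $\partial A$ on the ray $\{a + tu : t > 0\}$, is globally Lipschitz with some constant $M$ depending on $A$. Assuming without loss of generality that $a = 0$, we have $\psi_A(x) = \rho(x/|x|) \cdot x/|x|$. The passage from a Lipschitz bound on $\rho$ to the conclusion of the lemma is routine: for $\xi \ne 0$ and $x, y \in B(\xi, \eta |\xi|) \cap \overline A$ (say with $\eta < 1/2$), the elementary estimate $|x/|x| - y/|y|| \leq 2|x-y|/\min(|x|,|y|)$ combined with the decomposition
\[
\psi_A(x) - \psi_A(y) = \rho(u_x)(u_x - u_y) + (\rho(u_x) - \rho(u_y)) u_y,
\]
together with $\rho \leq \mathrm{diam}(A)$ and $|x|, |y| \geq (1-\eta)|\xi|$, gives the desired bound with $T = 2(\mathrm{diam}(A) + M)/(1-\eta)$.

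To prove $\rho$ is Lipschitz, I would first verify that $\rho$ is continuous on $S^{d-1}$ by a standard upper/lower semi-continuity argument using that $A$ is open and $\overline A$ is closed. For the quantitative estimate, consider two unit vectors $u_1, u_2 \in S^{d-1}$ with corresponding boundary points $w_i = \rho(u_i) u_i$, and apply the law of sines in the triangle with vertices $0, w_1, w_2$. The non-tangentiality assumption (Definition~\ref{def:1}) ensures that, provided $|w_1 - w_2| < \epsilon$, the angle $\alpha$ at $w_1$ in this triangle lies in $(\theta, \pi - \theta)$, so $\sin \alpha \geq \sin \theta$. The law of sines then yields
\[
|w_1 - w_2| \;=\; |w_2|\,\frac{\sin \phi}{\sin \alpha} \;\leq\; \frac{\mathrm{diam}(A)}{\sin \theta}\,\phi,
\]
where $\phi$ is the angle between $u_1$ and $u_2$; since $|\rho(u_1) - \rho(u_2)| \leq |w_1 - w_2|$, this is a local Lipschitz bound for $\rho$ whenever the boundary points are within $\epsilon$ of one another.

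The remaining step is to upgrade this local bound to a global one valid for any $u_1, u_2 \in S^{d-1}$. I would connect $u_1$ and $u_2$ by a geodesic arc on the sphere; by continuity of $\rho$, the associated curve $t \mapsto w(t) \in \partial A$ is continuous with compact image, so the arc admits a finite partition on each subarc of which $w(\cdot)$ has diameter less than $\epsilon$. Applying the local estimate on each subarc and summing the angular increments gives $|\rho(u_1) - \rho(u_2)| \leq (\mathrm{diam}(A)/\sin\theta)\,\phi_{\mathrm{tot}}$, which translates (using $\phi \leq \pi |u_1 - u_2|/2$) into a Lipschitz bound in the Euclidean metric on $S^{d-1}$. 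I expect the main obstacle to be precisely the triangle/law-of-sines estimate, which is where the non-tangentiality hypothesis is essential; the continuity of $\rho$ and the chaining argument are standard, and the final assembly into the statement of the lemma is a direct computation.
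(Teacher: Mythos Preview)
Your approach is correct and takes a genuinely different route from the paper. The paper argues directly: given $x,y\in B(\xi,\eta|\xi|)$, it traps $\psi_A(y)$ between the two points where the line $L(0,y)$ meets the double cone of aperture $2\theta$ with vertex $\psi_A(x)$, having chosen $\eta$ small enough (in terms of $\theta$ and $\epsilon$) that both intersection points lie in $B(\psi_A(x),\epsilon)$; a short trigonometric calculation then yields the bound with $T=2\csc(\theta/2)\max_{w\in\partial A}|w|$. Your route --- factor $\psi_A$ through the radial function $\rho$, obtain a local Lipschitz bound on $\rho$ from the law of sines, then chain along a spherical geodesic to upgrade this to a global bound --- is more conceptual and actually proves something slightly stronger (it gives the conclusion for every $\eta\in(0,1)$), at the cost of the extra continuity-and-chaining step. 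Both arguments invoke non-tangentiality at essentially the same point: the interior angle at the boundary vertex is bounded away from $0$ and $\pi$, so its sine is bounded below by $\sin\theta$.

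One caution: the continuity of $\rho$ is not as automatic as your phrasing suggests. Lower semicontinuity follows from $A$ being open, but upper semicontinuity does \emph{not} follow merely from $\overline A$ being closed; a general bounded star domain can have discontinuous radial function (for instance, a disc with an open angular sector of larger radius attached). What rescues you is precisely the non-tangential hypothesis: if $u_n\to u$ with $\rho(u_n)\to L>\rho(u)$, then for every $s\in(\rho(u),L]$ one has $su_n\in A$ for large $n$ and hence $su\in\overline A\setminus A=\partial A$, so an entire radial segment lies in $\partial A$; any two nearby points on it have zero acute angle with $L(0,w)$, contradicting Definition~\ref{def:1}. You should make this step explicit rather than absorbing it into a ``standard'' argument.
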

\begin{proof}
\begin{figure}
	\includegraphics[width=14cm,height=10cm]{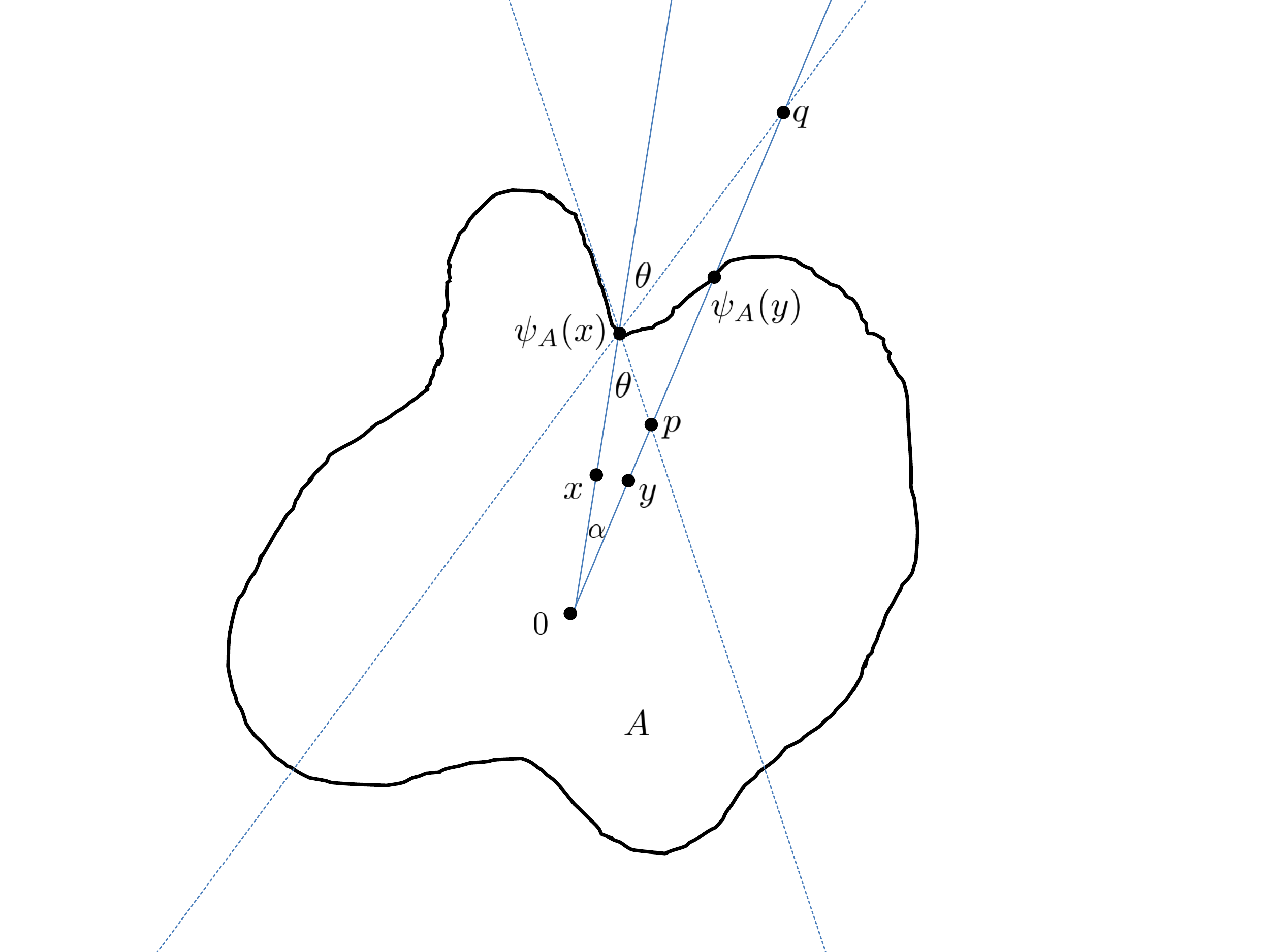}
  \caption{The construction from Lemma~\ref{glemma}}\label{fig1}
\end{figure}
For simplicity of notation, we can assume that $a = 0$. Let $\theta$ and $\epsilon$ be the constants from Definition~\ref{def:1}. Let $\eta > 0$ be sufficiently small that
$$
\eta < \min \left\{\frac{1}{2}, \frac{1}{4}\sin \left(\theta/2\right), \frac{\epsilon \sin\left(\theta/2\right)}{4\max_{w \in\partial A} |w|} \right\}.
$$

Fix $\xi\in\overline{A}\setminus\{0\}$. Suppose that $x, y \in B(\xi, \eta|\xi|) \cap \overline{A}$. We can assume that $\psi_A(x) \ne \psi_A(y)$, as otherwise there is nothing to prove. Let $p$ and $q$ be the points where $L(0,y)$ meets the double cone with vertex $\psi_A(x)$, axis through $0$, and aperture $2\theta$, and let $\alpha$ be the acute angle between $L(0,x)$ and $L(0,y)$; see Figure~\ref{fig1}. 

We note that the first two constraints on $\eta$ imply first that 
\begin{equation}
\label{eq:sinalpha}
\sin \alpha \leq \frac{|x - y|}{|x|}\leq \frac{2\eta|\xi|}{(1-\eta)|\xi|} \leq 4 \eta, 
\end{equation}
and then that $2\alpha < \theta$. 

It follows, by a geometric construction and by (\ref{eq:sinalpha}), that the constraints on $\eta$ ensure that $p$ and $q$ both lie within the ball $B(\psi_A(x), \epsilon)$. Hence, by Definition~\ref{def:1}, $L(0,y)$ meets $\partial A$ in $B(\psi_A(x), \epsilon)$, and therefore $\psi_A(y) \in \ell(p,q)$.

It follows, by a similar geometric calculation, that
\begin{align*}
|\psi_A(x) - \psi_A(y)| &\leq \max\{|\psi_A(x) - p|, |\psi_A(x) - q|\} \\
                        &= \frac{|\psi_A(x)|\sin \alpha}{\min\{\sin(\theta-\alpha), \sin(\pi-\theta-\alpha)\}}\\
                        &\leq |x - y|\frac{2\max_{w \in \partial A} |w|}{|\xi|\sin (\theta/2)}.
\end{align*}
The result follows, with $T = 2\csc (\theta/2)\max_{w \in \partial A} |w|$.
\end{proof}
We now give the main result of this section.
\begin{theorem}
\label{theorem:construction}
Suppose that $A, B \subset \R^d$ are bounded star domains with non-tangential star centres $a, b$ respectively. Suppose also that $f : \partial A \to \partial B$ is a biLipschitz surjection. Radially extend $f$ to a function from $\overline{A}$ to $\overline{B}$, which we continue to denote by $f$, by defining
\begin{equation}
\label{fdef}
f(x) := 
\begin{cases}
b + \frac{|x - a|}{|\psi_A(x) - a|}(f(\psi_A(x)) - b), \qfor x \in A\setminus\{a\} \\
b, \quad\quad\quad\quad\quad\quad\quad\quad\quad\quad\quad\text{ }\qfor x = a.
\end{cases}
\end{equation}
Then $f$ is a biLipschitz surjection.
\end{theorem}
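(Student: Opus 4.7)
The plan is to verify three claims in turn: $(i)$ the extension $f$ is a bijection $\overline{A} \to \overline{B}$ whose inverse is itself the radial extension of $f^{-1}|_{\partial B}$; $(ii)$ $f$ is Lipschitz; and $(iii)$ the biLipschitz property then follows by applying $(ii)$ to the inverse.

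For $(i)$, let $g : \overline{B} \to \overline{A}$ denote the radial extension of $f^{-1}|_{\partial B}$ via formula (\ref{fdef}) with the roles of $(A,a)$ and $(B,b)$ interchanged. Abbreviate $t(x) := |x-a|/|\psi_A(x) - a|$ for $x \in \overline{A}\setminus\{a\}$. The key observation is that $f(x) - b$ is a non-negative scalar multiple of $f(\psi_A(x)) - b$; since $b$ is a star centre of $B$, this forces $\psi_B(f(x)) = f(\psi_A(x))$ and $|f(x) - b|/|\psi_B(f(x)) - b| = t(x)$. A direct substitution into the defining formula for $g$ then yields $g(f(x)) = x$, and the symmetric computation gives $f(g(y)) = y$, so $f$ is bijective and $g = f^{-1}$.

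For $(ii)$, set $m_A := \operatorname{dist}(a, \partial A) > 0$ and $M_A := \sup_{w \in \partial A}|w - a|$, define $M_B$ analogously, and let $L$ denote the Lipschitz constant of $f|_{\partial A}$. Let $\eta, T$ be the constants of Lemma~\ref{glemma} applied to $(A,a)$. Decompose
$$f(x) - f(y) = t(x)\bigl(f(\psi_A(x)) - f(\psi_A(y))\bigr) + \bigl(t(x) - t(y)\bigr)\bigl(f(\psi_A(y)) - b\bigr),$$
and estimate each summand in two regimes. In the \emph{near} regime $|x-y| < \eta|x-a|$, Lemma~\ref{glemma} applied at $\xi = x$ gives $|\psi_A(x) - \psi_A(y)| \leq (T/|x-a|)|x-y|$; using $t(x)/|x-a| = 1/|\psi_A(x)-a| \leq 1/m_A$, this bounds the first summand by $(LT/m_A)|x-y|$. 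For the second summand, expanding
$$t(x) - t(y) = \frac{|x-a|\bigl(|\psi_A(y)-a| - |\psi_A(x)-a|\bigr) + |\psi_A(x)-a|\bigl(|x-a| - |y-a|\bigr)}{|\psi_A(x)-a|\,|\psi_A(y)-a|}$$
and invoking both $\bigl||x-a| - |y-a|\bigr| \leq |x-y|$ and $\bigl||\psi_A(x)-a| - |\psi_A(y)-a|\bigr| \leq |\psi_A(x) - \psi_A(y)|$ yields $|t(x) - t(y)| \leq (T + M_A)|x-y|/m_A^2$, so the second summand is at most $M_B(T + M_A)|x-y|/m_A^2$. In the \emph{far} regime $|x-y| \geq \eta|x-a|$, one has $|x-a|,|y-a| \leq (1 + 1/\eta)|x-y|$ by the triangle inequality, so $|f(x) - b| \leq t(x)M_B \leq (|x-a|/m_A)M_B$ and similarly for $|f(y) - b|$, whence $|f(x) - f(y)| \leq |f(x)-b|+|f(y)-b|$ closes the estimate.

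The main obstacle is that Lemma~\ref{glemma} provides only a local Lipschitz bound for $\psi_A$, whose constant blows up as the base point approaches $a$; the regime split is chosen so that this blow-up is absorbed either by the factor $t(x)$ (in the near regime) or by the fact that both $f(x)$ and $f(y)$ lie close to $b$ (in the far regime). For $(iii)$, claim $(i)$ identifies $f^{-1}$ as the radial extension of the biLipschitz map $f^{-1}|_{\partial B}$ from $\partial B$ to $\partial A$ relative to the non-tangential star centre $b$ of $B$, so applying the argument of $(ii)$ in that setting shows $f^{-1}$ is Lipschitz, completing the proof.
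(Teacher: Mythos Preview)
Your proof is correct and follows essentially the same route as the paper: the symmetry observation reducing biLipschitz to Lipschitz, the same near/far split at the threshold $|x-y| = \eta|x-a|$, and the same use of Lemma~\ref{glemma} in the near regime. The only difference is cosmetic---you organize $f(x)-f(y)$ as a clean two-term split via $t(x)$, whereas the paper factors out $|f(\psi_A(x))|/|\psi_A(x)|$ and a unit vector to obtain a three-term expansion---but the estimates and the mechanism for absorbing the blow-up of the $\psi_A$ Lipschitz constant are identical.
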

\begin{proof}
Note that the definition of $f$ is symmetric, in the following sense. Since $f$ is biLipschitz on $\partial A$, it has a biLipschitz inverse $\phi$ on $\partial B = f(\partial A)$. It follows from (\ref{fdef}) that $\phi$ radially extends to an inverse of $f$ on $\overline{B}$ with
\begin{equation}
\label{phidef}
\phi(x) := 
\begin{cases}
a + \frac{|x - b|}{|\psi_B(x) - b|}(\phi(\psi_B(x)) - a), \text{ }\qfor x \in B\setminus\{b\} \\
a, \quad\quad\quad\quad\quad\quad\quad\quad\quad\quad\quad\quad\qfor x = b.
\end{cases}
\end{equation}

We prove that $f$ is Lipschitz on $\overline{A}$. Applying the same argument to $\phi$ shows that $\phi$ is Lipschitz on $\overline{B}$. The fact that $f$ is biLipschitz then follows. \\

For simplicity of notation, we can assume that $a=b=0$. Let 
$$
M := \sup_{w \in \overline{A} \cup \overline{B}} |w| < \infty \quad\text{and}\quad \delta := \inf_{w \in \partial A \cup \partial B} |w| > 0,
$$
and let $L$ be the Lipschitz constant of $f$ on $\partial A$.

Since 
$$
|f(x) - f(0)| = \frac{|x|}{|\psi_A(x)|}|f(\psi_A(x))| \leq \frac{M}{\delta}|x|,
$$
we see that $f$ is Lipschitz at the origin.

Now take $x, y \in \overline{A}$. We can assume, without loss of generality, that $|x| \geq |y| > 0$. Let $w := \frac{f(\psi_A(x))}{|f(\psi_A(x))|}$ and note that $|w| = 1$. Then
\begin{align}
|f(x) - f(y)| &= \left|\frac{|x|}{|\psi_A(x)|}f(\psi_A(x)) - \frac{|y|}{|\psi_A(y)|}f(\psi_A(y))\right| \nonumber \\
              &= \frac{|f(\psi_A(x))|}{|\psi_A(x)|} \left||x|w - \frac{|y||\psi_A(x)|}{|\psi_A(y)|}\frac{f(\psi_A(y))}{|f(\psi_A(x))|}\right|.
\label{daneq}
\end{align}

We consider two cases. First, suppose that $|x-y|\geq\eta|x|$, where $\eta \in (0,1)$ is the constant from Lemma~\ref{glemma}. Using (\ref{daneq}) then gives that
\begin{align*}
\frac{|f(x) - f(y)|}{|x-y|} 
             &\leq \frac{|f(x) - f(y)|}{\eta|x|} \\
             &\leq \frac{|f(\psi_A(x))|}{\eta|\psi_A(x)|} \left(1 + \frac{|y||\psi_A(x)||f(\psi_A(y))|}{|x||\psi_A(y)||f(\psi_A(x))|}\right) \\
             &\leq \frac{M}{\eta\delta}\left(1 + \frac{M^2}{\delta^2}\right).
\end{align*}

On the other hand, suppose instead that $|x-y| < \eta|x|$, in other words that $y~\in~B(x, \eta|x|)$. By (\ref{daneq})
\begin{align*}
&|f(x) - f(y)| \\
&= \frac{|f(\psi_A(x))|}{|\psi_A(x)|}\left|(|x| - |y|)w + \frac{|y|(|\psi_A(y)| - |\psi_A(x)|)}{|\psi_A(y)|}w + \frac{|y||\psi_A(x)|}{|\psi_A(y)||f(\psi_A(x))|}(f(\psi_A(x)) - f(\psi_A(y)))\right|\\
&\leq \frac{M}{\delta}\left(|x - y| + \frac{|y|}{\delta}|\psi_A(y) - \psi_A(x)| + \frac{|y|M}{\delta^2}|f(\psi_A(x)) - f(\psi_A(y))|\right)\\
&\leq \frac{M}{\delta}\left(|x - y| + \frac{|y|}{\delta}\left(1 + \frac{ML}{\delta}\right)|\psi_A(y) - \psi_A(x)|\right).
\end{align*}

Now an application of Lemma~\ref{glemma}, with $\xi = x$, yields that
\begin{align*}
|f(x) - f(y)| &\leq \frac{M}{\delta}\left(|x - y| + \left(1 + \frac{ML}{\delta}\right)\frac{|y|T}{|x|\delta}|x-y|\right) \\
              &\leq \frac{M}{\delta}\left(1 + \left(1 + \frac{ML}{\delta}\right)\frac{T}{\delta}\right) |x - y|,
\end{align*}
completing the proof that $f$ is Lipschitz on $\overline{A}$.
\end{proof}
%
%
%
\section{A modified Zorich map, $Z$, and the function $Id + Z$}
\label{SZor}
Zorich \cite{MR0223569} introduced a family of quasiregular maps of transcendental type from $\R^3$ to $\R^3\setminus\{0\}$, now known as Zorich maps, which can be seen as analogues of the exponential function; see, for example, \cite[Section 6.5.4]{MR1859913} for a detailed description of these maps. First we give the definition of a modified Zorich map; this map is slightly easier to work with in our setting than any of the standard Zorich maps.

The standard definition of a Zorich map in $\R^3$ starts with a map from a square to a hemisphere. Instead, we use a map from a square to the upper faces of a square based pyramid. To this end, set $$M(x_1, x_2) := \max\{|x_1|, |x_2|\},$$ and define a biLipschitz map $h$ from the square $\left[-1, 1\right]^2$ to the upper faces of the square based pyramid with vertex $(0,0,1)$ and base $[-1,1]^2 \times \{0\}$ by setting
$$
h(x_1, x_2) := \left(x_1, x_2, 1 - M(x_1, x_2)\right).
$$

The rest of the definition of a Zorich map proceeds in the usual way. In an infinite square cylinder we define
$$
Z : \left[-1,1\right]^2 \times \mathbb{R} \to \{x_3 \geq 0\}
$$
by setting
$$
Z(x_1,x_2,x_3) := e^{x_3} h(x_1,x_2).
$$

This is then extended to a map $Z : \mathbb{R}^3 \to \mathbb{R}^3 \setminus \{0\}$ by reflections in the usual way; in particular, when the domain (a  square cylinder) is reflected in a face, the image is reflected in the plane $\{ x_3 = 0\}$. It can be shown that $Z$ is quasiregular, and periodic in the $x_1$ and $x_2$ directions with period $4$.

We observe that although $Z$ is such that $Z([1,3]^2\times \R) = Z([-1,1]^2\times \R)$, the function $Z$ operates slightly differently on these two domains. This is because the two reflections used to define $Z$ in $[1,3]^2\times \R$ induce a rotation. \\ 

Next, we define a function $F : \R^3 \to \R^3$ by 
\begin{equation}
\label{Feq}
F(x) := x + Z(x).
\end{equation}

We consider the properties of $F$ in a \emph{fundamental half-beam}. This is defined as a domain of the form
\begin{equation}
\label{Bnmeq}
\mathcal{B}_{n,m} = \{ |x_1 - 2n| < 1, |x_2 - 2m| < 1, x_3 > \firstbound \}, \qfor n, m \in \mathbb{Z},
\end{equation}
where $L>0$ is a constant that is fixed in the following proposition. 
\begin{proposition}
\label{prop:Fbeam}
There exist $\firstbound > 1$ and $K>1$ such that if $\mathcal{B}$ is a fundamental half-beam, then the following both hold:
\begin{enumerate}[(a)]
\item $F$ is $K$-quasiregular in $\mathcal{B}$;
\item $|F(x) - F(y)| \geq 32|x - y|$, for $x, y \in \mathcal{B}$.
\end{enumerate}
\end{proposition}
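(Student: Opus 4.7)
The plan is to exploit the exponential scaling $Z(u_1,u_2,x_3) = e^{x_3}h(u_1,u_2)$, which makes $Z$ dominate the identity summand once $x_3 > L$ is large. By the $4$-periodicity of $Z$ in $(x_1,x_2)$, the reflection symmetry of the Zorich construction, and the translation-invariance of $DF = I + DZ$, it suffices to establish both (a) and (b) on the single beam $\mathcal{B} = (-1,1)^2 \times (L,\infty)$.

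For (a), the beam $\mathcal{B}$ is cut by the lines $|u_1| = |u_2|$ into four pieces on which $h$ is affine. In each such piece $DZ(x) = e^{x_3}A$, where $A$ is a fixed $3 \times 3$ matrix with bounded entries and $|\det A| = 1$ (a direct computation). Hence the singular values of $DZ$ all lie in an interval $[c\,e^{x_3},\, C\,e^{x_3}]$ for universal constants $0 < c < C$. The Weyl perturbation inequality applied to $DF = DZ + I$ forces each singular value of $DF$ to lie within $1$ of the corresponding singular value of $DZ$. Once $L$ is large enough that $c\,e^L \gg 1$, this forces the pointwise inner and outer dilatations of $F$ to differ from those of $Z$ by a factor approaching $1$ as $L \to \infty$. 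In particular $F$ is $K$-quasiregular on $\mathcal{B}$ for any fixed $K$ exceeding the dilatation of $Z$; for instance $K = 2\max\{K_O(Z), K_I(Z)\}$.

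For (b), the triangle inequality gives $|F(x) - F(y)| \geq |Z(x) - Z(y)| - |x - y|$, reducing matters to showing that $|Z(x) - Z(y)| \geq 33|x-y|$ on $\mathcal{B}$. I would invert $Z$ explicitly: from $w = e^{x_3}h(u)$ one reads off $\rho := e^{x_3} = w_3 + M(w_1,w_2)$, whence $Z^{-1}(w) = (w_1/\rho,\, w_2/\rho,\, \log \rho)$. Differentiating in each affine piece gives $|DZ^{-1}(w)| \leq C_1/\rho \leq C_1 e^{-L}$ throughout the image $R := Z(\mathcal{B})$. To promote this pointwise estimate to a global Lipschitz bound, note that $R = \{w_3 > 0,\, w_3 + M(w_1,w_2) > e^L\}$ is the upper half-space with a bounded convex ``pyramid'' removed; the scaling $w \mapsto e^{-L}w$ identifies $R$ with the fixed region $\{w_3 > 0,\, w_3 + M(w_1,w_2) > 1\}$, which is the complement of a compact convex set in a half-space and hence is quasiconvex with some universal constant $c_0$. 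Joining arbitrary $w, w' \in R$ by a rectifiable path in $R$ of length at most $c_0|w-w'|$ and integrating $|DZ^{-1}|$ along it yields $|Z^{-1}(w) - Z^{-1}(w')| \leq C_1 c_0 e^{-L}|w-w'|$, equivalently $|Z(x) - Z(y)| \geq (C_1 c_0)^{-1} e^L|x-y|$. Choosing $L$ so that $(C_1 c_0)^{-1}e^L \geq 33$ then completes (b).

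The main obstacle is the globalisation step in (b): although $|DZ^{-1}| \leq C_1 e^{-L}$ holds inside each affine piece, the image region $R$ is not convex, so this pointwise estimate must be transferred to a Euclidean Lipschitz bound via a quasiconvexity argument. The scaling $w \mapsto e^{-L}w$ is what makes the resulting quasiconvexity constant universal in $L$, which is essential for being able to choose a single $L$ that works for both (a) and (b), and uniformly across all fundamental beams $\mathcal{B}_{n,m}$.
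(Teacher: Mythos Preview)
Your argument is correct and follows the same outline as the paper's: for (a) bound the dilatation of $DF=I+DZ$ on one beam and transfer by symmetry, and for (b) reduce via the triangle inequality to $|Z(x)-Z(y)|\ge 33|x-y|$ and pass through the inverse of $Z|_{\mathcal B}$. The differences are cosmetic. For (a) the paper writes out $DF$ explicitly on $\mathcal B_{0,0}$ and reads off $J_F\ge\tfrac12 e^{3x_3}$ and $|DF|\le 7e^{x_3}$ directly, rather than going through the Weyl perturbation inequality. For (b) the paper simply records $\ell(DZ)>33$ a.e.\ on $\{x_3>L\}$ and cites \cite[(2.3)]{MR2797689} for the passage from this pointwise bound on the inverse to the global expansion estimate, whereas you supply that step yourself via the quasiconvexity of $Z(\mathcal B)$ after rescaling; your treatment of (b) is thus more self-contained. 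One small slip: the matrix $A$ in $DZ=e^{x_3}A$ is not ``fixed'' but varies with $(u_1,u_2)$; what you actually use, and what holds, is that its entries are uniformly bounded with $\det A\equiv 1$.
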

\begin{proof}
First, we consider the fundamental half-beam $\mathcal{B}_{0,0}$. 
Then, 
\begin{equation*}
DF(x) = 
\begin{cases}
\left(
\begin{array}{ccc}%
	e^{x_3} + 1 & 0 & x_1 e^{x_3} \\
	0 & e^{x_3} + 1 & x_2 e^{x_3} \\
	-\frac{x_1}{|x_1|}e^{x_3} & 0 & (1-|x_1|)e^{x_3} + 1
\end{array} 
\right), \qfor 0 < |x_2| < |x_1| < 1, \\
\left(
\begin{array}{ccc}%
	e^{x_3} + 1 & 0 & x_1 e^{x_3} \\
	0 & e^{x_3} + 1 & x_2 e^{x_3} \\
	0 & -\frac{x_2}{|x_2|}e^{x_3} & (1-|x_2|)e^{x_3} + 1
\end{array} 
\right), \qfor 0 < |x_1| < |x_2| < 1.
\end{cases}
\end{equation*}
Hence there exists $\firstbound>1$ such that
\begin{equation}
\label{Jeq}
J_F(x) \geq \frac{1}{2} e^{3 x_3} \quad\text{and}\quad |DF(x)| \leq 7 e^{x_3}, \quad \text{a.e. for } x \in \mathcal{B}_{0,0}.
\end{equation}

By a similar calculation, it can be seen that the inequality (\ref{Jeq}) holds in all other fundamental half-beams. The first part of the proposition follows. \\
 
Increasing $\firstbound$ if necessary, we can deduce by a similar, but slightly simpler, calculation of the derivative of $Z$ that 
$$
\ell(DZ(x)) > 33, \quad\text{a.e. for  } x \in \{ x_3 > L \}.
$$
Suppose that $\mathcal{B}$ is a fundamental half-beam. We note that $Z$ is injective in $\mathcal{B}$; this is an immediate consequence of the definition of $Z$. As in the proof of \cite[(2.3)]{MR2797689}, it follows by considering the inverse function of $Z|_{\mathcal{B}}$, that
\begin{equation*}
|F(x) - F(y)| \geq |Z(x) - Z(y)| - |x - y| \geq 32|x - y|, \qfor x, y \in \mathcal{B},
\end{equation*}
as required.
\end{proof}
It is straightforward to show that $F$ is not quasiregular in $\R^3$; there are domains in which $F$ is orientation-reversing. However, we have the following immediate corollary of Proposition~\ref{prop:Fbeam}.
\begin{corollary}
\label{corr:Fgood}
The function $F$ is $K$-quasiregular in $\{ x_3 > \firstbound \}$.
\end{corollary}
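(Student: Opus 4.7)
The plan is to glue the quasiregularity on each fundamental half-beam into quasiregularity on the whole half-space $\{x_3 > \firstbound\}$. The family $\{\mathcal{B}_{n,m}\}_{n,m\in\Z}$ consists of pairwise disjoint open sets whose union exhausts $\{x_3 > \firstbound\}$ except for a set of three-dimensional Lebesgue measure zero (the planes $x_1 \in 2\Z+1$ and $x_2 \in 2\Z+1$). On each beam Proposition~\ref{prop:Fbeam}(a) gives that $F$ is $K$-quasiregular for a uniform constant $K$, so the natural candidate dilatation for $F$ on the half-space is the same $K$.

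To promote the beam-wise conclusion to a global one, three ingredients are needed: continuity of $F$ on $\{x_3 > \firstbound\}$; membership of $F$ in $W^1_{3,loc}(\{x_3 > \firstbound\})$; and the pointwise dilatation inequalities holding almost everywhere on the half-space. Continuity of $F = \mathrm{Id} + Z$ is immediate from the definition of $Z$, which is continuous on $\R^3$ by construction via the reflection extension. The a.e.\ dilatation bounds on each beam aggregate to a.e.\ bounds on the union, since the "walls" between adjacent beams form a null set; no choice is forced on $K$ beyond the one already in the proposition.

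The main (mild) obstacle is the Sobolev membership. For this I would use the derivative bound $|DF(x)| \leq 7 e^{x_3}$ from (\ref{Jeq}) on each beam, which together with continuity across the walls shows that $F$ is locally Lipschitz on $\{x_3 > \firstbound\}$: on any compact subset of the half-space only finitely many beams are relevant, and on each beam $F$ is Lipschitz with a common constant controlled by the $x_3$-bound. A locally Lipschitz map on an open subset of $\R^d$ lies in $W^1_{\infty,loc}$, and hence in $W^1_{3,loc}$, with weak derivatives agreeing almost everywhere with the classical ones. With this in hand, the beam-wise inequalities (\ref{KOeq}) and (\ref{KIeq}) transfer directly to the half-space, and $F$ is $K$-quasiregular on $\{x_3 > \firstbound\}$ with the same $K$ as in Proposition~\ref{prop:Fbeam}.
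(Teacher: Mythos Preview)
Your argument is correct and is precisely the unpacking of what the paper treats as an ``immediate corollary'' of Proposition~\ref{prop:Fbeam}: the paper gives no further proof. Your decomposition into beams, the observation that the walls form a null set, and the passage to $W^1_{3,loc}$ via local Lipschitz continuity (using the derivative bound from (\ref{Jeq}) together with continuity of $F=\mathrm{Id}+Z$ across the walls) are exactly the routine steps one would supply if asked to justify the corollary.
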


Finally in this section, we record some elementary properties of the function $F$. 
%
First, 
\begin{equation}
\label{periodiceq1}
F(x+ c) = F(x)+ c, \qfor x \in \R^3,\ c \in \{(4,0,0), (0,4,0)\}.
\end{equation}

Second, let $R_1$ denote reflection in the plane $\{ x_1 = 2\}$, and let $R_2$ denote reflection in the plane $\{ x_2 = 2\}$. Then $F$ commutes with these functions; in other words
\begin{equation}
\label{roteq}
F \circ R_i = R_i \circ F, \qfor i \in \{1,2\}.
\end{equation}

It is immediate that the identity map $Id$ also enjoys these properties.
%
%
%
%
%
%
\section{Proof of Theorem~\ref{Texistshalf}}
\label{Sexistshalf}
In this section we prove Theorem~\ref{Texistshalf} by constructing a quasiregular map on $\R^3$ which is equal to $F = Id + Z$ in a half-space, equal to $Id$ in another half-space, and given by an interpolation between these half-spaces. We start the interpolation by defining a quasiregular map on a certain cuboid.

Let $\firstbound>1$ be the constant from Proposition~\ref{prop:Fbeam}, and let $A$ be the cuboid
$$
A := \{ 0 \leq x_1 \leq 2, \ 0 \leq x_2 \leq 2, \ 0 \leq x_3 \leq \firstbound \}.
$$

We will define a quasiregular map $g : A \to \R^3$ with the following properties. If $x = (x_1, x_2, x_3) \in A$ and $g(x) = (g_1(x), g_2(x), g_3(x))$, then:
\begin{enumerate}[(A)]
\item $g(x) = Id(x), \qfor x_3 = 0$;\label{A}
\item $g(x) = F(x), \qfor x_3 = \firstbound$;\label{B}
\item $g_1(x) = 0, \qfor x_1 = 0$;\label{C}
\item $g_1(x) = 2, \qfor x_1 = 2$;\label{D}
\item $g_2(x) = 0, \qfor x_2 = 0$;\label{E}
\item $g_2(x) = 2, \qfor x_2 = 2$.\label{F}
\end{enumerate}

Before giving the details of the construction of $g$ on $A$, we show how to extend $g$ to a quasiregular map on $\R^3$, of transcendental type, and equal to the identity map in a half-space.

So, for the moment, assume that we can define a quasiregular map $g : A \to \R^3$ with properties (\ref{A})--(\ref{F}). We first extend the definition of $g$ in $A$ to three similar cuboids. First we set
$$
g(x) = (R_1 \circ g \circ R_1)(x), \qfor x \in \{ 2 \leq x_1 \leq 4, \ 0 \leq x_2 \leq 2, \ 0 \leq x_3 \leq \firstbound \}.
$$
This extension is continuous because of property (\ref{D}). Second we set
$$
g(x) = (R_2 \circ g \circ R_2)(x), \qfor x \in \{ 0 \leq x_1 \leq 2, \ 2 \leq x_2 \leq 4, \ 0 \leq x_3 \leq \firstbound \}.
$$
This extension is continuous because of property (\ref{F}). Finally, we set
$$
g(x) = (R_2 \circ R_1 \circ g \circ R_1 \circ R_2)(x), \qfor x \in \{ 2 \leq x_1 \leq 4, \ 2 \leq x_2 \leq 4, \ 0 \leq x_3 \leq \firstbound \}.
$$
This extension is continuous because of properties (\ref{D}) and (\ref{F}). This defines a quasiregular map in 
$$
B := \{ 0 \leq x_1 \leq 4, \ 0 \leq x_2 \leq 4, \ 0 \leq x_3 \leq \firstbound \}.
$$

Moreover, we have the following. Suppose that $x = (x_1, x_2, x_3) \in B$. Then: 
\begin{enumerate}[(i)]
\item $g(x) = Id(x)$, for $x_3 = 0$\label{A1}. This holds by property (\ref{A}) above, and by construction.
\item $g(x) = F(x)$, for $x_3 = \firstbound$\label{B1}. This holds by property (\ref{B}) above, and then by construction and by (\ref{roteq}).
\item $g(4, x_2, x_3) = g(0, x_2, x_3) + (4, 0, 0)$. \label{D1} This holds by construction and by property (\ref{C}) above.
\item $g(x_1, 4, x_3) = g(x_1, 0, x_3) + (0, 4, 0)$. \label{F1} This holds by construction and by property (\ref{E}) above.
\end{enumerate}

We next extend the definition of $g$ to $\{ 0 \leq x_3 \leq \firstbound\}$. For $a \in \R$, we write $\tilde{a} := a \operatorname{mod } 4$; in other words, $\tilde{a} \in [0,4)$ is such that $a - \tilde{a}$ is a multiple of $4$. We then set
$$
g(x_1, x_2, x_3) = g(\tilde{x}_1, \tilde{x}_2, x_3) + (x_1 - \tilde{x}_1, x_2 - \tilde{x}_2, 0), \qfor 0 \leq x_3 \leq L.
$$

This construction gives a quasiregular map on $\{ 0 \leq x_3 \leq \firstbound\}$ because of properties (\ref{D1}) and (\ref{F1}) above. Moreover, it follows from (\ref{periodiceq1}) that properties (\ref{A1}) and (\ref{B1}) above are still satisfied.

Finally we extend $g$ to the whole of $\R^3$ by setting $g(x_1, x_2, x_3) = Id(x_1, x_2, x_3)$, for $x_3 < 0$ and $g(x_1, x_2, x_3) = F(x_1, x_2, x_3)$, for $x_3 > \firstbound$. It follows from properties (\ref{A1}) and (\ref{B1}) above that $g$ is a continuous map of $\R^3$.

Clearly $g$ is equal to the identity map in $\{ x_3 < 0 \}$, and quasiregular there. Moreover $g$ is quasiregular in $\{ 0 \leq x_3 \leq \firstbound\}$, by construction. Finally, $g$ is quasiregular in $\{ x_3 > \firstbound\}$ by Corollary~\ref{corr:Fgood}. It remains, therefore, to construct the function $g$ in $A$. \\

In fact, we achieve this 
in two stages. We partition $A$ into two cuboids
$$
A' := \{ 0 \leq x_1 \leq 2, \ 0 \leq x_2 \leq 2, \ 0 \leq x_3 \leq 1 \},
$$
and
$$
A'' := \{ 0 \leq x_1 \leq 2, \ 0 \leq x_2 \leq 2, \ 1 \leq x_3 \leq \firstbound \},
$$
and we define $g$ separately in each.

We ensure that the definition of $g$ in $A'$ satisfies property (\ref{A}) and properties (\ref{C})--(\ref{F}). The definition of $g$ in $A'$ includes a ``folding'' which is needed to accommodate the property of $Z$ in the domain $[1,3]^2 \times \R$, mentioned in Section~\ref{SZor}.

We ensure that the definition of $g$ in $A''$ satisfies property (\ref{B}), properties (\ref{C})--(\ref{F}), and agrees with the previous definition in $A' \cap A''$. 

%
%
%
\subsection{The construction in $A'$}
\label{Adashdef}
In this subsection we show how to define the map $g$ in $A'$. First we define a set of points of $A'$, and fix their images under $g$. These are specified in Table~\ref{table:1}. We use the convention that, for example, $P_1$ (resp. $P_\firstbound$) is a translation of the point $P_0$ by one (resp. $\firstbound$) units in the direction of the third coordinate.

We note that not all points in Table~\ref{table:1} are vertices of $A'$. However, for simplicity, we call points such as $T_1$ and $X_1$ \emph{vertices}, the $20$ line segments, such as $l(T_1, X_1)$, shown in Figure~\ref{fig:cuboid} \emph{edges}, and the four small squares such as that with vertices $T_1, X_1, W_1, P_1$ \emph{faces}.
 
\begin{table}[ht]
\centering
 \begin{tabular}{||c c c c c c||} 
 \hline
 Point & Coordinate & Image under $g$ & Point & Coordinate & Image under $g$ \\ [0.5ex] 
 \hline\hline
 $P_0$ & $(0,0,0)$ & $(0,0,0)$ & $P_1$ & $(0,0,1)$ & $(0,0,4)$ \\ [0.25ex] 
 \hline
 $Q_0$ & $(0,2,0)$ & $(0,2,0)$ & $Q_1$ & $(0,2,1)$ & $(0,2,3.5)$\\ [0.25ex] 
 \hline
 $R_0$ & $(2,2,0)$ & $(2,2,0)$ & $R_1$ & $(2,2,1)$ & $(2,2,0.5)$ \\ [0.25ex] 
 \hline
 $S_0$ & $(2,0,0)$ & $(2,0,0)$ & $S_1$ & $(2,0,1)$ & $(2,0,-0.4)$ \\ [0.25ex] 
 \hline
 & & & $T_1$ & $(0,1,1)$ & $(0,4,4)$ \\ [0.25ex] 
 \hline
 & & & $U_1$ & $(1,2,1)$ & $(4.5,2,2)$ \\ [0.25ex] 
 \hline
 & & & $V_1$ & $(2,1,1)$ & $(2,4,-0.4)$ \\ [0.25ex] 
 \hline
 & & & $W_1$ & $(1,0,1)$ & $(6,0,2)$ \\ [0.25ex] 
 \hline
 & & & $X_1$ & $(1,1,1)$ & $(6,4,2)$ \\ [0.25ex] 
 \hline
 \end{tabular}
\caption{Points of $A'$, and their images}
\label{table:1}
\end{table}

\begin{figure}[ht]
	\centering
	\includegraphics[width=14cm,height=10cm]{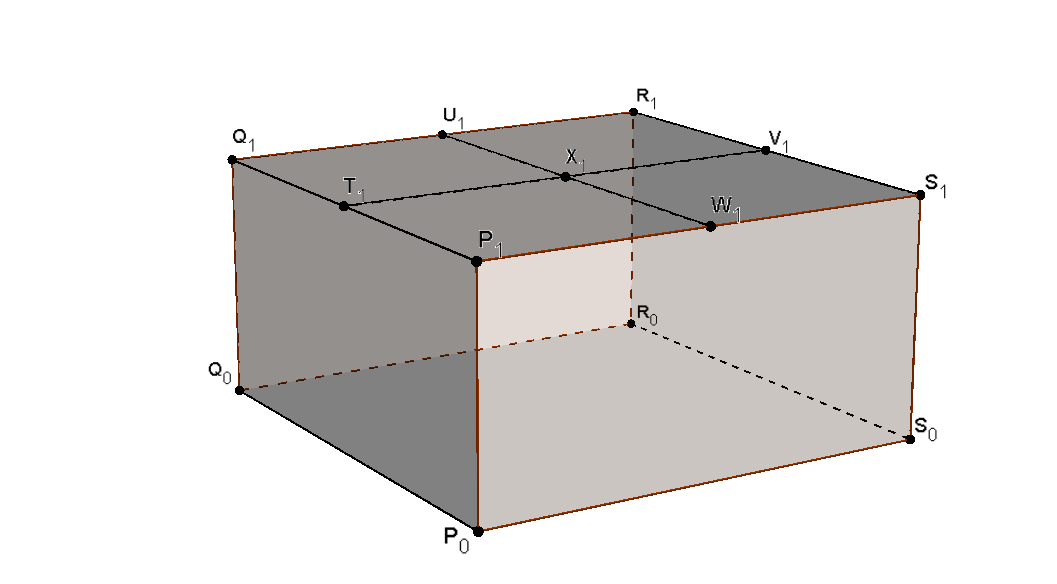}
	\caption{The cuboid $A'$ and points from Table~\ref{table:1}. Note that these points divide the top face of $A'$ into four squares.}
  \label{fig:cuboid}
\end{figure}

The images of the points in Table~\ref{table:1} have been chosen in such a way that they define a polyhedron, which we denote by $B'$, which has nine faces:
\begin{itemize}
\item A square in the plane $\{ x_3 = 0 \}$, defined by the vertices $P_0, Q_0, R_0, S_0$, which is also a face of $A'$;
\item Four quadrilaterals:
\begin{itemize}
\item One in the plane $\{ x_1 + 3x_3 = 12 \}$, defined by the images of the vertices $P_1, T_1, X_1, W_1$;
\item One in the plane $\{ 4x_1 - 3x_2 + 12x_3 = 36 \}$, defined by the images of the vertices $T_1, Q_1, U_1, X_1$;
\item One in the plane $\{ -12x_1 + 9x_2 + 20x_3 = 4 \}$, defined by the images of the vertices $U_1, R_1, V_1, X_1$;
\item One in the plane $\{ 3x_1 - 5x_3 = 8 \}$, defined by the images of the vertices $V_1, S_1, W_1, X_1$;
\end{itemize}
\item Four pentagons:
\begin{itemize}
\item One in the plane $\{ x_1 = 0\}$, defined by the images of the vertices $P_0, P_1, T_1, Q_1, Q_0$;
\item One in the plane $\{ x_2 = 0\}$, defined by the images of the vertices $P_0, P_1, W_1, S_1, S_0$;
\item One in the plane $\{ x_1 = 2\}$, defined by the images of the vertices $R_0, R_1, V_1, S_1, S_0$;
\item One in the plane $\{ x_2 = 2\}$, defined by the images of the vertices $Q_0, Q_1, U_1, R_1, R_0$.
\end{itemize}
\end{itemize}

\begin{figure}[ht]
	\centering
	\includegraphics[width=14cm,height=10cm]{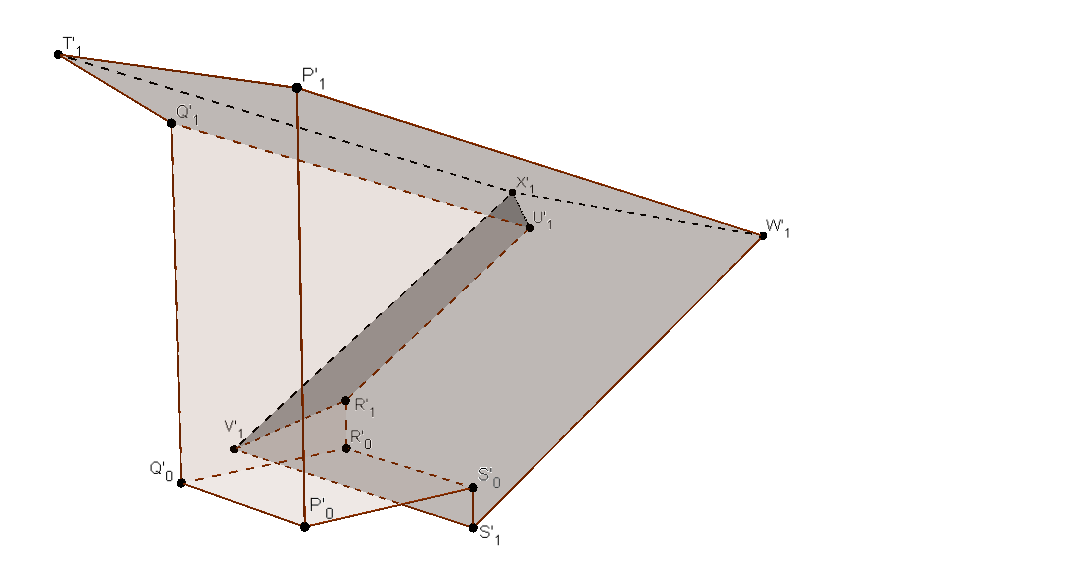}
	\caption{A view of the polyhedron $B' = g(A')$, using points from Table~\ref{table:1}. Note that, for example, $P'_0$ denotes $g(P_0)$. We have shaded the images of faces which correspond to the four squares in the upper face of $A'$.}
	\label{fig:inter}
\end{figure}

We aim to construct the function $g$ so that it maps the cuboid $A'$ onto the polyhedron $B'$; see Figure~\ref{fig:inter}. Using the images of the vertices specified in Table~\ref{table:1}, we first define $g$ from each edge of $A'$ to the corresponding edge of $B'$ simply as an affine map.

Since the four points $P_0, Q_0, R_0, S_0$ map to themselves, we can set $g = Id$ on the square defined by these four points. Hence property~(\ref{A}) is satisfied.

We map the remaining eight faces of $A'$ to the corresponding eight faces of $B'$ in the following way. Let, for example, $\alpha$ be the interior of the face of $A'$ with vertices $T_1, X_1, W_1, P_1$ and let $\beta$ be the interior of the face of $B'$ with vertices the images of these points. It is easy to see that $\alpha$ and $\beta$ are star domains with non-tangential star centres. By considering $\alpha$ and $\beta$ as each lying in a copy of $\R^2$, we apply Theorem~\ref{theorem:construction} to extend the definition of $g$ to a biLipschitz surjection from $\overline{\alpha}$ to $\overline{\beta}$. We apply this process to each pair of corresponding faces of $A'$ and $B'$. The resulting function, $g$, from the boundary of $A'$ to the boundary of $B'$ is a biLipschitz surjection.

Now, it can been seen from Figure~\ref{fig:inter} -- and confirmed by an elementary calculation -- that the interior of $B'$ is a star domain, with a non-tangential star centre at, for example, $(5, 1, 2)$. Also, the interior of $A'$ is clearly a star domain with a non-tangential star centre at, for example, $(1,1,0.5)$. Hence we apply Theorem~\ref{theorem:construction} once again to extend the definition of $g$ to a biLipschitz surjection from $A'$ to $B'$. 

Since $g$ is biLipschitz, it follows (see, for example, \cite[p. XI]{MR950174}) that $g$ is quasiconformal, and is either orientation-preserving or orientation-reversing. It can be seen that $g$ is orientation-preserving. Hence $g$ is quasiregular.

The points $P_0, P_1, T_1, Q_1, Q_0$ and the images of these points all lie in the plane $\{ x_1 = 0 \}$. Hence property (\ref{C}) is satisfied. Similarly the points $R_0, R_1, V_1, S_1, S_0$ and the images of these points all lie in the plane $\{ x_1 = 2 \}$. Hence property (\ref{D}) is satisfied. It is easy to see that properties (\ref{E}) and (\ref{F}) are satisfied, for similar reasons.
%
%
%
\subsection{The construction in $A''$}
In this subsection we define the map $g$ in $A''$. The construction is complicated. First, it is necessary to subdivide $A''$ into four smaller cuboids;
\begin{itemize}
\item $A''_1 := \{ 0 \leq x_1 \leq 1,   \ 0 \leq x_2 \leq 1, \ 1 \leq x_3 \leq \firstbound \}$,
\item $A''_2 := \{ 1 \leq x_1 \leq 2, \ 0 \leq x_2 \leq 1, \ 1 \leq x_3 \leq \firstbound \}$,
\item $A''_3 := \{ 0 \leq x_1 \leq 1,   \ 1 \leq x_2 \leq 2, \ 1 \leq x_3 \leq \firstbound \}$,
\item $A''_4 := \{ 1 \leq x_1 \leq 2, \ 1 \leq x_2 \leq 2, \ 1 \leq x_3 \leq \firstbound \}$.
\end{itemize}

For each $i \in \{1,2,3,4\}$, we define a biLipschitz map $g$ from $A''_i$ to a compact set  $B''_i$ which is the closure of a star domain. To achieve this, we adopt the approach used in the previous section, by defining a biLipschitz map from $\partial A''_i$ to $\partial B''_i$, and then extending the map to $\overline{A''_i}$ using Theorem~\ref{theorem:construction}.

Each cuboid $A''_i$ has a face which lies in the plane $\{ x_3 = 1 \}$ and a face which lies in the plane $\{ x_3 = \firstbound \}$; we call these two faces the \emph{end faces}. Each cuboid $A''_i$ has a face which lies in the plane $\{x_1 = 1\}$ and a face which lies in the plane $\{x_2 = 1\}$; we call these faces the \emph{interior faces}. The remaining two faces we call the \emph{exterior faces}. We give the definition of $g$ on these three types of face as follows.
\subsubsection{The end faces}
\label{endfaces}
Suppose that $i\in\{1,2,3,4\}$. We have already fixed the definition of $g$ on $\partial A''_i  \cap \{ x_3 = 1\}$ in Subsection~\ref{Adashdef}. In order to satisfy property (\ref{B}), we set $$g(y) = F(y), \qfor y \in \partial A''_i \cap \{ x_3 = \firstbound\}.$$ 

Note that the image of each of the squares $\partial A''_i \cap \{ x_3 = \firstbound \}$ consists of two triangles. For example, $\partial A''_1 \cap \{ x_3 = \firstbound \}$ maps to the triangles with vertices $g(P_\firstbound), g(W_\firstbound), g(X_\firstbound)$ and $g(P_\firstbound), g(X_\firstbound), g(T_\firstbound)$.
%
\subsubsection{The exterior faces}
\label{extfaces}
In order to satisfy properties (\ref{C})--(\ref{F}) above, we map the exterior faces of the $A''_i$ to faces lying in the appropriate planes. We also ensure consistency with the definition of $g$ on the end faces given in Subsection~\ref{endfaces}. 

Consider, for example, the exterior face $$L_1 := \partial A''_1 \cap \{ x_1 = 0\}.$$ The boundary of $L_1$ consists of four line segments. Two of these line segments also lie in end faces. The definition of $g$ in Subsection~\ref{endfaces} maps these line segments to line segments. We use an affine map to take the other two edges of $L_1$ to line segments.

All four images of the edges of $L_1$ lie in the plane $\{ x_1 = 0 \}$, and so we let $L_1'$ be the quadrilateral lying in $\{ x_1 = 0 \}$ bounded by these line segments; $L_1'$ is the quadrilateral $P_1'P_L'T_L'T_1'$ in Figure~\ref{figure:A1}. It is easy to see that both $L_1$ and $L_1'$ are compact sets, and that the interiors of each are star domains with non-tangential star centres. Hence we can extend $g$ to a biLipschitz subjection from  $L_1$ to $L'_1$ by an application of Theorem~\ref{theorem:construction}.

It is straightforward to see that the same technique can be used to define $g$ on the other exterior faces, and that properties (\ref{C})--(\ref{F}) are satisfied by this construction.
\begin{remark}\normalfont
Here it is important to note that the four edges of each exterior face are mapped onto genuine quadrilaterals, and not figure-of-eight shapes. It is for this reason that we previously mapped the upper face of $A'$ to the particular ``folded'' arrangement shown shaded in Figure~\ref{fig:inter}. 
\end{remark} 
\subsubsection{The interior faces}
We define $g$ on the interior faces of the $A''_i$ so that the interior of each $B''_i$ is a star domain with a non-tangential star centre. We also ensure consistency with the definition of $g$ on the end faces given in Subsection~\ref{endfaces}, and on the exterior faces give in Subsection~\ref{extfaces}.

Consider, for example, the interior face $$L_2 := \partial A''_1 \cap \{ x_2 = 1\}.$$ Divide $L_2$ into two triangles by adding a line segment from $X_1$ to $T_L$. The boundaries of these two triangles consist, in total, of five line segments. Two of these line segments also lie in end faces. The definition of $g$ in Subsection~\ref{endfaces} maps these line segments to line segments. The line segment $L_2 \cap \{ x_1 = 0\}$ also lies in an exterior face. The definition of $g$ in Subsection~\ref{extfaces} maps this line segment to a line segment. Finally, we use an affine map to take the other two edges to line segments.

Thus we have described how $g$ maps the boundaries of the two triangles $T_1 X_1 T_L$ and $T_L X_1 X_L$ onto the boundaries of two image triangles. We can now apply Theorem~\ref{theorem:construction} to extend $g$ from the boundary of each to the whole triangle, and therefore to the whole of $L_2$. 

It is straightforward to see that the same technique can be used to define $g$ on the other interior faces.

\subsubsection{Defining the function $g$ inside the $A''_i$}
\begin{figure}[ht]
	\centering
	\includegraphics[width=14cm,height=10cm]{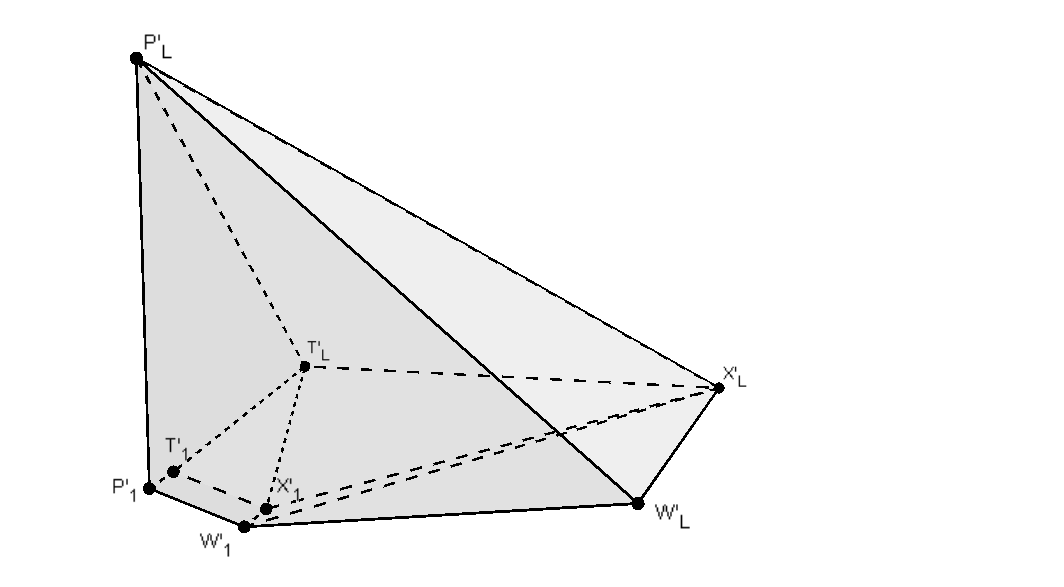}
	\caption{The image of $A''_1$. As before, $P'_1$ denotes $g(P_1)$, for example.}
	\label{figure:A1}
\end{figure}
Suppose that $i \in \{1,2,3,4\}$. Let $B''_i$ be the compact set bounded by the images of the boundary faces of $A''_i$. It can be seen that the interior of $B''_i$ is a star domain with a non-tangential star centre; see Figure \ref{figure:A1}, and we note that the images of $A''_2$, $A''_3$ and $A''_4$ are similar to this. The non-tangential star centres can be taken to be close to the vertices with large third coordinate; in other words near the images of $P_\firstbound$, $S_\firstbound$, $Q_\firstbound$ and $R_\firstbound$. It follows that, by an application of Theorem~\ref{theorem:construction}, we can extend $g$ to a biLipschitz surjection from $A''_i$ to $B''_i$. 

Since $g$ is biLipschitz, it follows -- as earlier -- that $g$ is quasiconformal and is either orientation-preserving or orientation-reversing. It is easy to see that $g$ is, in fact, orientation-preserving in each case and is therefore quasiregular.

This completes the definition of $g$ in $A''$, and so completes the proof of Theorem~\ref{Texistshalf}.
%
%
%
%
%
%
\section{Proof of Theorem~\ref{Texists}}
\label{Sexists}
In this section we use the function $g$ constructed in Section~\ref{Sexistshalf} to prove Theorem~\ref{Texists}. The function $f$ is defined as follows. By construction, the third component of $g(x)$ is bounded above when $x \in \{ x_3 \leq \firstbound \}$, where $\firstbound$ is the constant from Proposition~\ref{prop:Fbeam}. Hence we can choose $\secondbound>0$ sufficiently large that the quasiregular map
\begin{equation}
\label{mainfdef}
f(x) := g(x) - (0, 0, \secondbound),
\end{equation}
satisfies
\begin{equation}
\label{fprop}
f(\{ x_3 \leq \firstbound\}) \subset \{ x_3 < 0 \}.
\end{equation}

Recall the definition (\ref{Bnmeq}) of a fundamental half-beam. We require the following. Let $\mathcal{L}$ be the union of all lines  of the form
\begin{equation}
\label{Ldef}
\{ x_1 = 4n + c, x_2 = 4m + (2-c) \}, \qfor n, m \in\mathbb{Z}, \ c \in \{0,2\}.
\end{equation}
\begin{proposition}
\label{prop:discs}
Suppose that $\xi \in \{ x_3 > \firstbound \}$ is such that $f(\xi) \in \{ x_3 > \firstbound \}$. Suppose that $\delta > 0$, that $B(\xi, \delta)$ is contained in a fundamental half-beam, and that $f(B(\xi, \delta)) \cap \mathcal{L} = \emptyset$. Then, there exists $\xi' \in B(\xi, \delta)$ such that $B(f(\xi'), 2\delta)$ is contained in the intersection of $f(B(\xi, \delta))$ with a fundamental half-beam.
\end{proposition}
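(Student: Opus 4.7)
The plan is to use the strong local expansion in fundamental half-beams provided by Proposition~\ref{prop:Fbeam}. Let $\mathcal{B}$ denote the fundamental half-beam containing $B(\xi,\delta)$ and set $p := f(\xi)$. Since $f=F-(0,0,L')$ is quasiregular, injective on $\mathcal{B}$ (inherited from $Z$), and satisfies $|f(x)-f(y)|\geq 32|x-y|$ there, the restriction $f|_{\mathcal{B}}:\mathcal{B}\to f(\mathcal{B})$ is a homeomorphism (openness of the image coming from the open-mapping property of quasiregular maps) whose inverse is $\tfrac{1}{32}$-Lipschitz. A standard boundary-distance argument, extracting limits of preimages of points in $\partial f(\mathcal{B})$, gives $\operatorname{dist}(p,\partial f(\mathcal{B}))\geq 32\operatorname{dist}(\xi,\partial\mathcal{B})\geq 32\delta$, so $B(p,32\delta)\subset f(\mathcal{B})$, and applying the Lipschitz inverse then yields
$$B(p,32\delta)\subset f(B(\xi,\delta)).$$

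The role of the hypothesis $f(B(\xi,\delta))\cap\mathcal{L}=\emptyset$ is to force $\delta$ to be small. Writing the horizontal projection of $\mathcal{L}$ as $\Lambda:=\{(4n,4m+2),(4n+2,4m):n,m\in\mathbb{Z}\}$, every point of $\mathbb{R}^2$ lies within distance $2$ of $\Lambda$ (the maximum being attained at points such as the origin). Since $B(p,32\delta)$ is disjoint from $\mathcal{L}$, the horizontal distance from $(p_1,p_2)$ to $\Lambda$ must exceed $32\delta$, forcing $\delta<\tfrac{1}{16}$.

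The remainder is an elementary coordinatewise selection. I will pick $q$ inside the axis-aligned box of half-side $8\delta$ centred at $p$ so that $q_1$ and $q_2$ each lie at distance $>2\delta$ from every odd integer and $q_3>L+2\delta$; then $q$ sits in some fundamental half-beam $\mathcal{B}_{n,m}$ with $\operatorname{dist}(q,\partial\mathcal{B}_{n,m})>2\delta$. In each horizontal coordinate, the length-$16\delta$ interval $[p_i-8\delta,p_i+8\delta]$ meets at most one odd integer (since $16\delta<1$), so its ``bad'' subset has length at most $4\delta$ and the ``good'' subset length at least $12\delta>0$. The vertical interval $(L+2\delta,\infty)\cap[p_3-8\delta,p_3+8\delta]$ is non-empty because $p_3>L$. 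Picking $q_1,q_2,q_3$ independently from their good sets gives $|q-p|\leq 8\sqrt{3}\delta<16\delta$, so $B(q,2\delta)\subset B(p,32\delta)\subset f(B(\xi,\delta))$ and at the same time $B(q,2\delta)\subset\mathcal{B}_{n,m}$. Writing $q=f(\xi')$ with $\xi'\in B(\xi,\delta)$ completes the argument.

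The main obstacle is the first step: converting the pointwise separation estimate $|f(x)-f(y)|\geq 32|x-y|$ into the concrete image-ball inclusion $B(p,32\delta)\subset f(B(\xi,\delta))$. This requires both the injectivity of $f|_\mathcal{B}$ and the open-mapping property in order to force preimages of boundary points of the image to escape to $\partial\mathcal{B}$. Everything that follows is elementary Euclidean geometry, exploiting only that $\delta<1/16$ and $p_3>L$.
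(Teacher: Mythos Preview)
Your argument is correct and follows essentially the same route as the paper: first establish $B(f(\xi),32\delta)\subset f(B(\xi,\delta))$ from the expansion estimate in Proposition~\ref{prop:Fbeam}(b), then use the $\mathcal{L}$-avoidance to locate a $2\delta$-ball inside a fundamental half-beam. The paper's proof of the second step is extremely terse (``The result then follows from the fact that this hemisphere does not meet $\mathcal{L}$''), and you have correctly unpacked what is implicit there --- namely, that avoidance of $\mathcal{L}$ forces $\delta$ to be small, after which an elementary box argument produces the required centre $q$. One small slip: the open ball $B(p,32\delta)$ being disjoint from the vertical lines $\mathcal{L}$ only gives $\operatorname{dist}((p_1,p_2),\Lambda)\geq 32\delta$, hence $\delta\leq 1/16$ rather than strict inequality; but your coordinatewise selection still works verbatim with $16\delta\leq 1$, since consecutive odd integers are distance~$2$ apart.
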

\begin{proof}
First we note that $B(f(\xi), 32\delta) \subset f(B(\xi, \delta))$. This claim follows from the fact that, in $\{ x_3 > L\}$, $f$ and $F$ differ only by a translation, where $F$ is the function defined in (\ref{Feq}), and from the second part of Proposition~\ref{prop:Fbeam}. The details are essentially the same as the proof of \cite[Lemma 3.3]{MR3173490}, and are omitted.

The fact that $f(\xi) \in \{ x_3 > \firstbound \}$ implies that there is a hemisphere of $B(f(\xi), 32\delta)$ that lies in $\{x_3 > L\}$. The result then follows from the fact that this hemisphere does not meet $\mathcal{L}$. 
\end{proof}
We also require the following result, which is a generalisation to discrete open maps of \cite[Corollary 2]{MR1642181}. Here if $\phi : \R^d \to \R^d$ is a continuous map, then we say that $\xi \in \R^d$ is a \emph{finite asymptotic value} of $\phi$ if there is a curve $\Gamma : (0, 1) \to \R^d$ such that $\lim_{t \rightarrow 1} \Gamma(t) = \infty$ and  $\lim_{t \rightarrow 1} \phi(\Gamma(t)) = \xi$. 
\begin{lemma}
\label{lemm:asympt}
Suppose that $\phi : \R^d \to \R^d$ is continuous, discrete and open, and has no finite asymptotic value. Let $D \subset \R^d$ be a domain, and suppose that $E$ is a component of $\phi^{-1}(D)$. Then $\phi(E) = D$.
\end{lemma}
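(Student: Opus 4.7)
The plan is to set $V := \phi(E)$ and show $V = D$ by a standard open-and-closed argument. Openness of $V \subset \R^d$ is immediate from the hypothesis that $\phi$ is open, and $V$ is a non-empty subset of $D$ because $E$ is. Since $D$ is connected, the equality $V = D$ will follow once I show that $V$ is relatively closed in $D$; I would attack this by contradiction, assuming some $y_0 \in D \cap \overline V \setminus V$ and deriving one.

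To produce the contradiction I would first fix $r > 0$ with $\overline{B(y_0,r)} \subset D$, pick $y_{n_0} \in V \cap B(y_0,r)$ from any sequence $V \ni y_n \to y_0$, and use the straight-line parametrisation $\alpha(t) := y_{n_0} + t(y_0 - y_{n_0})$ to obtain a path $\alpha : [0,1) \to D$ with $\alpha(0) = y_{n_0} \in V$ and $\alpha(t) \to y_0$ as $t \to 1^-$. After choosing $x_0 \in E$ with $\phi(x_0) = y_{n_0}$, I would invoke the classical maximal path-lifting theorem for continuous, discrete, open maps (as used in the quasiregular context in \cite{MR1238941}) to produce a maximal lift $\tilde\alpha : [0,b) \to \R^d$ of $\alpha$ with $\tilde\alpha(0) = x_0$, for some $b \in (0,1]$. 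The key output of that theorem is the dichotomy: either $b = 1$, or $b < 1$ and $|\tilde\alpha(t)| \to \infty$ as $t \to b^-$. Because $\phi \circ \tilde\alpha = \alpha$ lies in $D$, the curve $\tilde\alpha$ is a connected subset of $\phi^{-1}(D)$ that meets $E$, so it lies wholly in $E$.

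I would then split into two cases. If $\tilde\alpha(t)$ escapes every compact subset of $\R^d$ as $t$ approaches the right endpoint of its domain, then after reparametrising $\tilde\alpha$ on $(0,1)$ I obtain a path tending to infinity along which $\phi$ has a finite limit in $D$ (namely $\alpha(b)$ if $b < 1$, or $y_0$ if $b = 1$); this limit is a finite asymptotic value of $\phi$, contradicting the hypothesis. Otherwise $\tilde\alpha$ has a bounded subsequence, which by the dichotomy forces $b = 1$; extracting a convergent sub-subsequence $\tilde\alpha(t_k) \to x^*$, continuity gives $\phi(x^*) = y_0 \in D$, so $x^* \in \phi^{-1}(D)$. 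Since components of an open subset of $\R^d$ are open and pairwise disjoint, and since $\tilde\alpha(t_k) \in E$ accumulates at $x^*$, I conclude $x^* \in E$, whence $y_0 = \phi(x^*) \in V$, the desired contradiction.

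The hard part, and the only place where the full strength of the hypotheses on $\phi$ really enters, is the path-lifting step. This is available in the stated form because openness and discreteness of $\phi$, combined with local compactness of $\R^d$, permit the usual Zorn's-lemma construction of a maximal lift whose only possible failure mode is escape from every compact set; once this is granted, the rest of the argument is formal open-and-closed reasoning together with a single application of the no-finite-asymptotic-value hypothesis to exclude that escape.
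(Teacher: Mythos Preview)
Your argument is correct. Both your proof and the paper's reduce to lifting a path in $D$ toward a putative missed point and analysing the behaviour of the lift near its right endpoint, so the core idea is the same; the packaging differs. You frame the problem as an open-and-closed argument for $\phi(E)\subset D$, import the maximal path-lifting dichotomy for discrete open maps from \cite{MR1238941} as a black box, and need only a subsequential limit $x^*$ of the lift (together with the observation that components of $\phi^{-1}(D)$ are open) to conclude $x^*\in E$. The paper instead argues the contrapositive directly: it chooses the target curve $\gamma$ to avoid the image of the branch set $\phi(B_\phi)$ (using that this set has topological dimension at most $d-2$), constructs the lift by hand via local homeomorphisms, and invokes discreteness explicitly to show the lift has a genuine limit rather than merely subsequential limits. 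Your version is shorter and sidesteps the branch-set argument entirely; the paper's version is more self-contained and makes the role of discreteness visible rather than hidden inside the cited lifting theorem.
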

\begin{proof}
We prove the contrapositive; if $D\setminus \phi(E) \ne \emptyset$, then $\phi$ has a finite asymptotic value. Accordingly, suppose that $y \in D\setminus \phi(E)$. Let $B_\phi$ be the \emph{branch set} of $\phi$; in other words
$$
B_\phi = \{ x \in \R^d : \phi \text{ is not a local homeomorphism at } x \}.
$$ Choose $x_0 \in E\setminus B_\phi$. Let $\gamma : [0,1] \to D$ be a non-self-intersecting curve, compactly contained in $D$, such that
\begin{itemize}
\item $\gamma(0) = \phi(x_0)$ and $\gamma(1) = y$;
\item $\gamma([0,1)) \cap \phi(B_\phi) = \emptyset$.
\end{itemize}
This is possible because \cite[Lemma 9.8]{MR950174} the topological dimension of $\phi(B_\phi)$ is at most $d-2$. (In fact, we can take $\gamma$ to be a polygonal curve with a finite number of pieces.)

Since $x_0 \notin B_\phi$, the function $\phi$ is injective in a neighbourhood of $x_0$. It follows that, for all sufficiently small $t>0$, there exists a curve $\Gamma_t:[0,t]\to\R^d$ such that $\Gamma_t(0) = x_0$ and $\phi(\Gamma_t(s)) = \gamma(s)$, for $0\leq s \leq t$. Let $t'$ be the supremum of these values of $t$. 

Suppose that $t \in (0, t')$. Since $\Gamma_t \cap B_\phi = \emptyset$, the function $\phi$ is locally injective on $\Gamma_t$. Hence, if $s \in (0, t]$, then $\Gamma_t|_{[0,s]} = \Gamma_s$. It follows that we can define a curve $\Gamma : [0,t') \to \R^d$ by $\Gamma(t) = \Gamma_t(t)$. 

Let $\mathcal{T}$ denote the set of strictly increasing sequences of real numbers in $[0, t')$ with the following property. If $(\tau_n)_{n\isnatural} \in \mathcal{T}$, then $\tau_n \rightarrow t'$ as $n\rightarrow\infty$, and $\Gamma(\tau_n)\rightarrow w$ as $n\rightarrow\infty$, where $w \in \overline{E} \cup \{ \infty\}$. 


Suppose that $(\tau_n)_{n\isnatural}$ and  $(\tau'_n)_{n\isnatural}$ are elements of $\mathcal{T}$. Suppose in addition that $\Gamma(\tau_n)\rightarrow w_1$ and $\Gamma(\tau'_n)\rightarrow w_2$ as $n\rightarrow\infty$, and that $w_1 \ne w_2$. We can assume that $w_1$ is finite. Since $\phi$ is discrete, this implies that there exist $(\tau''_n)_{n\isnatural} \in \mathcal{T}$ and $w_3 \in \overline{E}$ such that $\Gamma(\tau''_n)\rightarrow w_3$ as $n\rightarrow\infty$ and $\phi(w_3) \ne \phi(w_1)$. However, by continuity, $\phi(w_1) = \gamma(t') = \phi(w_3)$. This is a contradiction. 


It follows that either $\lim_{t\rightarrow t'} \Gamma(t) =\infty$, or $\lim_{t\rightarrow t'} \Gamma(t)$ exists and is finite. In the first case $\phi$ has a finite asymptotic value equal to $\gamma(t')$. It remains to show that the second case leads to a contradiction.

Suppose that $w = \lim_{t\rightarrow t'} \Gamma(t)$ is finite. If $w \in \partial E$, then $\gamma$ accumulates on $\partial D$, which is a contradiction. Hence $w \in E$, and so $\gamma(t') = \phi(w) \ne y$. It follows that $0 < t' < 1$, and that $\phi$ is a homeomorphism from a sufficiently small neighbourhood, $U$, of $w$, onto a neighbourhood, $V$, of $\phi(w)$. Hence, there exists $\epsilon >0$ such that $\gamma([t'-\epsilon,t'+\epsilon]) \subset V$. Thus the concatenation of $\Gamma_{t'-\epsilon}$ with $\phi^{-1} \circ \gamma|_{[t'-\epsilon,t'+\epsilon]}$ gives a curve $\Gamma_{t'+\epsilon}$, in contradiction to our choice of $t'$.
%
\end{proof}
\begin{proof}[Proof of Theorem~\ref{Texists}]
Define $H_0 := \{ x_3 < 0 \}$, and let
$$
 H_n := f^{-n}(H_0), \qfor n\isnatural.
$$

Then $\{H_n\}_{n\geq 0}$ is an increasing sequence of sets. Set $U := \bigcup_{n \geq 0} H_n$. \\

First we claim that $U \subset QF(f)$ and that $f^k\rightarrow\infty$ as $k\rightarrow\infty$ locally uniformly in $U$. Suppose that $x \in U$. Then there exists $k\isnatural$ such that $f^k(x) \in H_0$, and so there is a neighbourhood $\Delta$ of $x$ sufficiently small that $f^k(\Delta) \subset H_0$. The claim follows from this fact, recalling the definition of the quasi-Fatou set $QF(f)$ and the facts that $f(H_0) \subset H_0$ and $f$ acts as a translation on $H_0$. 
The fact that $f$ is locally uniformly quasiregular in $U$ follows in the same way. \\

We show next that $U$ is connected. We claim that $H_n$ is connected, for $n\isnatural$. Since $H_0 \subset H_n$, for $n\isnatural$, the fact that $U$ is connected follows from this claim.

We prove the claim by induction. First we show that $H_1 = H_0 \cup H_1$ is connected. Recall the definition of the collection of lines $\mathcal{L}$ from (\ref{Ldef}). These lines lie in $H_1$. Moreover, by a calculation, there exists $\epsilon > 0$ such that 
$$
\mathcal{L}' := \{ x : \operatorname{dist }(x, \mathcal{L}) < \epsilon \} \subset H_1.
$$
Since each component of $\mathcal{L}'$ meets $H_0$, it follows that $H_1$ has a component $V$ which contains $\mathcal{L}'$.

Let $V' \subset H_1$ be a component of $f^{-1}(H_0)$. It can be shown that if $V'$ contains a point $v = (v_1, v_2, v_3)$ that is not in any fundamental half-beam, then $$\{ x_1 = v_1, x_2 = v_2, x_3 \leq v_3 \} \subset V'.$$ In this case $V' = V$. It follows that we can assume that $V'$ is contained in some fundamental half-beam $\mathcal{B}$.

It can be seen that $f$ has no finite asymptotic values. Hence, by Lemma~\ref{lemm:asympt}, $V'$ contains a set, $X$ say, such that $$f(X) = \{ x_1 = 0, x_2 = 0, x_3 < 0\}.$$ It can be shown by a calculation that $X$ meets  $\mathcal{L}'$; this can be seen, for example, by considering, in the fundamental half-beam $\mathcal{B}$, the solution to the equation $$f(x) = (0, 0, c),$$ where $c$ is taken to be large and negative. It follows that $V' = V$, and so $H_1$ is connected, as required.

Now, suppose that $k\isnatural$ and that $H_k$ is connected. It remains to show that $H_{k+1}$ is connected. Suppose that $W$ is a component of $H_{k+1}$. It follows by Lemma~\ref{lemm:asympt} that there is a point $x \in W$ such that $f(x) \in H_{k-1}$, in which case $x \in H_k$. Thus $W$ meets $H_k$, and so $H_{k+1}$ is connected, as required. \\

%
%


Next we show that $U = QF(f)$. Suppose, by way of contradiction, that there exists $\xi_0 \in QF(f)\setminus U$. It follows that there exists $\delta > 0$ sufficiently small that $f^k(B(\xi_0, \delta)) \cap U = \emptyset$, for $k \geq 0$. We deduce that 
$$
f^k(B(\xi_0, \delta)) \subset \{ x_3 > L \} \setminus \mathcal{L}, \qfor k\geq 0.
$$

We can assume, by adjusting $\xi_0$ and $\delta$ if necessary, that $B(\xi_0, \delta)$ is contained in a fundamental half-beam.

Since both $\xi_0$ and $f(\xi_0)$ lie in $\{ x_3 > \firstbound \}$, it follows by Proposition~\ref{prop:discs} that there exists $\xi_1 \in QF(f)\setminus U$ such that $f^k(B(\xi_1, 2\delta)) \cap U = \emptyset$, for $k \geq 0$, and $B(\xi_1, 2\delta)$ is contained in the intersection of $f(B(\xi_0, \delta))$ with a fundamental half-beam.

By repeated application of Proposition~\ref{prop:discs}, we obtain a sequence of points $(\xi_k)_{k\geq 0}$ such that $B(\xi_k, 2^k\delta) \cap U = \emptyset$ and $B(\xi_k, 2^k\delta)$ is contained in a fundamental half-beam, for $k\geq 0$. 
This is a contradiction, since these balls cannot be contained in a fundamental half-beam for $2^k\delta > 1$. 

Finally we show that $U$ is full. Suppose, by way of contradiction, that $U$ is hollow. Since $U$ is periodic, it follows by Theorem~\ref{T2} that there exists $x \in U \cap A(f)$. Hence, by Lemma~\ref{lemm:inA}, equation (\ref{xinAeq}) holds. This is a contradiction, since $f^k(x) \in H_0$, for all sufficiently large $k$, and $f$ acts as a translation in $H_0$.
\end{proof} 
\begin{remarks}\normalfont
\begin{enumerate}
\item If $L'' > 0$ is sufficiently large, then $J(f)$ contains all lines of the form $$\{ x_1 = 4n + c, x_2 = 4m + c, x_3 \geq L'' \}, \qfor n, m \in\mathbb{Z}, \ c \in \{0, 2\}.$$ It seems possible that the techniques of \cite{MR2730579} could be used to prove that $J(f)$ is, in fact, a ``Cantor bouquet'' consisting of an uncountable number of curves, but we have not tried to do this. \\

\item Evdoridou \cite{Vasso1} studied the function $h$ defined in (\ref{Fatoueq}), and showed that $I(h)$, but not $A(h)$, has a structure known as a {\spw}; for a definition of a {\spw} see \cite{MR3215194}. This gives a positive answer to a question of Rippon and Stallard \cite[Question 3]{Rippon01102012}, which was previously answered using a more complicated construction in \cite{MR3077875}. In three dimensions, it is straightforward to see that $A(f)$ is not a {\spw}, where $f$ is the the quasiregular map defined in (\ref{mainfdef}). It seems natural to ask if it is the case that $I(f)$ is a {\spw}.
\end{enumerate}
\end{remarks}
%
%
%
%
%
%
\section{Additional examples}
\label{Sexample}
Our first example shows that, in $\R^2$, the inequality (\ref{Tspeedeq1}) is best possible.
\begin{example}\normalfont
\label{Example1}
Let $\phi : \R^2 \to \R^2$ be the orientation-preserving quasiconformal map $\phi(x) := |x|x$. With a very slight abuse of notation, let $f : \R^2 \to \R^2$ be the quasiregular map defined by $f := h \circ \phi,$ where $h$ is the map defined in (\ref{Fatoueq}).

It is straightforward to show first that $K_I(\phi) = 2$, and that it follows from this that $K_I(f) = 2$. It can be seen that there is a periodic component $U$ of $QF(f)$, containing a right half-plane $H$, such that 
$$
\log \log |f^k(x)| \sim k \log 2 = k \log K_I(f) \text{ as } k\rightarrow\infty, \qfor x \in H.
$$  

We show that $U$ is full as follows. Clearly $U$ meets $A(f)^c$. If $U$ is hollow, then it follows from Theorem~\ref{T2} that $U$ also meets $A(f)$. This is a contradiction since,  by the final part of Lemma~\ref{lemm:JA}, we have that $J(f) = \partial A(f)$. Hence $U$ is full.

\end{example}
Our second example shows that, in $\R^3$, the inequality (\ref{Tspeedeq1}) is best possible up to a constant.
\begin{example}\normalfont
\label{Example2}
Let $\phi : \R^3 \to \R^3$ be the orientation-preserving quasiconformal map $\phi(x) := |x|x$, let $f$ be the quasiregular map defined in (\ref{mainfdef}), and let $G : \R^3 \to \R^3$ be the quasiregular map $G := f \circ \phi.$ 

It can be seen that $G$ has a periodic domain $U$, containing a half-space $H$, in which 
\begin{equation*}
\log\log|G^{k}(x)| \sim k \log 2 \text{ as } k\rightarrow\infty, \qfor x \in H.
\end{equation*}

The fact that $U$ is full follows in exactly the same way as in Example~\ref{Example1}.
\end{example}
Our final example shows that there is a quasiregular map of $\R^3$ with a periodic domain that meets, but is not contained in, the escaping set.
\begin{example}\normalfont
\label{Example3}
Let $f$ be the quasiregular map defined in (\ref{mainfdef}). Let $C$ be an open ball in $\{ x_3 < 0\}$ sufficiently large that $C \cap f(C) \ne \emptyset$. Choose a point $x_0 \in C \cap f(C)$. Let $\phi : \R^3 \to \R^3$ be an orientation-preserving quasiconformal map such that $\phi(x) = x,$ for $x \notin f(C)$, and $\phi(f(x_0)) = x_0$. It is straightforward to see that such a $\phi$ exists; for example, by an application of Theorem~\ref{theorem:construction}.

Let $g = \phi \circ f$. It can be seen that $QF(g) = QF(f)$, and so $QF(g)$ consists of a full domain $U$. It can also be seen that $U \cap I(g) \ne \emptyset$, and that $x_0 \in U \setminus I(g)$.
\end{example}
%
%
%
%
%
%

\bibliographystyle{acm}
\bibliography{../../../Research.References}
\end{document}